\documentclass[a4paper,11pt]{article}

\usepackage[english]{babel}
\usepackage[T1]{fontenc}
\usepackage[utf8]{inputenc}
\usepackage{enumerate}
\usepackage{amsmath}
\usepackage{amssymb}
\usepackage{amsthm}
\usepackage{mathtools}
\usepackage{color}
\usepackage{csquotes}
\usepackage{hyperref}
\usepackage[backend=biber,
            style=alphabetic,
            giveninits=true,
            isbn=false,
            doi=false,
            url=false,
            sorting=nyvt]{biblatex}
\addbibresource{./bibliografia.bib}
\renewbibmacro{in:}{}

\newtheorem{thm}{Theorem} 
\newtheorem{thmA}{Theorem} 
\newtheorem{lemma}{Lemma}


\newcommand{\dist}{\mathrm{dist}}
\newcommand{\supp}[1]{\mathrm{supp} \left({#1}\right)}
\newcommand{\naturals}{\mathbb{N}}
\newcommand{\integers}{\mathbb{Z}}
\newcommand{\reals}{\mathbb{R}}
\newcommand{\dyadic}{\mathcal{D}}
\newcommand{\halfspace}[1]{\reals^{#1 +1}_+}

\newcommand{\dilation}[2]{\mathrm{dil}_{\rho}({#1},{#2})} 
\newcommand{\hyperball}[2]{\textrm{B}_\rho({#1},{#2})}
\newcommand{\Lp}[1]{\mathrm{L}^{#1}}
\newcommand{\locLp}[1]{\mathrm{L}^{#1}_{loc}}
\newcommand{\BMO}{\mathrm{BMO}}
\newcommand{\bmo}{\mathbf{bmo}}
\newcommand{\continuous}{\mathcal{C}}
\newcommand{\schwartz}{\mathcal{S}}
\newcommand{\lip}[1]{\Lambda_{#1}}
\newcommand{\zyg}{\Lambda_{\ast}}
\newcommand{\besov}[3]{\mathrm{B}^{#1}_{{#2},{#3}}}
\newcommand{\triebel}[3]{\mathrm{F}^{#1}_{{#2},{#3}}}
\newcommand{\zygsemi}{\dot{\Lambda}_{\ast}}
\newcommand{\IBMO}[1]{\mathrm{I}_{#1}(\BMO)}
\newcommand{\Jbmo}[1]{\mathrm{J}_{#1}(\bmo)}

\newcommand{\norm}[2]{\left\lVert #1 \right\rVert_{#2}}
\newcommand{\poisson}[1]{\textrm{P}\left[{#1}\right]}
\newcommand{\fourier}[1]{\mathcal{F}\left[{#1}\right]}
\newcommand{\invfourier}[1]{\mathcal{F}^{-1}\left[{#1}\right]} 
\newcommand{\bessel}[1]{(1-\Delta)^{{#1}/2}}
\newcommand{\opbessel}[1]{\mathrm{J}^{#1}}

\title{Approximation in the Zygmund and Hölder classes on $\reals^n$}
\author{\thanks{First author is supported by The Finnish Academy grant 1309940.}
        Eero~Saksman and
        \thanks{Second author is supported by the Generalitat de Catalunya
                grant 2017 SGR 395, the Spanish Ministerio de Ciencia e Innovaci\'on
                projects MTM2014-51824-P and MTM2017-85666-P,
                and the European Research Council
                project CHRiSHarMa no. DLV-682402.}
        Odí~Soler~i~Gibert}
\date{}

\newcommand{\Addresses}{{
  \bigskip
  \footnotesize

  \noindent Eero~Saksman\\
  \textsc{University of Helsinki, Department of Mathematics and Statistics,\\
  P.O. Box 68 , FIN-00014 University of Helsinki, Finland,}\\
  \textit{E-mail address: } \texttt{Eero.Saksman@helsinki.fi}

  \medskip

  \noindent Odí~Soler~i~Gibert\\
  \textsc{Universität Würzburg, Institut für Mathematik\\
  97074 Würzburg, Germany}\\
  \textit{E-mail address: }\texttt{odi.solerigibert@mathematik.uni-wuerzburg.de}

}}

\begin{document}

  \maketitle
  
  \begin{abstract}
    We determine the distance (up to a multiplicative constant) in the Zygmund class $\zyg(\reals^n)$ to the subspace $\Jbmo{}(\reals^n).$
    The latter space is the image under the Bessel potential $J \coloneqq \bessel{-1}$ of the space $\bmo(\reals^n),$
    which is a non-homogeneous version of the classical $\BMO.$
    Locally, $\Jbmo{}(\reals^n)$ consists of functions that together with their first derivatives are in $\bmo(\reals^n).$
    More generally, we consider the same question when the Zygmund class is replaced
    by the Hölder space $\lip{s}(\reals^n),$ with $0 < s \leq 1$ and the corresponding subspace is $\Jbmo{s}(\reals^n),$
    the image under $\bessel{-s}$ of $\bmo(\reals^n).$
    One should note here that $\lip{1}(\reals^n) = \zyg(\reals^n).$
    Such results were known earlier only for $n = s = 1$ with a proof that does not extend to the general case.
    
    Our results are expressed in terms of second differences.
    As a byproduct of our wavelet based proof, we also obtain the distance from $f \in \lip{s}(\reals^n)$
    to $\Jbmo{s}(\reals^n)$ in terms of the wavelet coefficients of $f.$
    We additionally establish a third way to express this distance in terms of
    the size of the hyperbolic gradient of the harmonic extension of $f$ on the upper half-space $\halfspace{n}.$
    \medskip\\
    \noindent\textsc{MCS Class:} 26B35.\\
    \textsc{Keywords:} Zygmund class, Hölder classes, $\BMO$-Sobolev spaces, wavelet characterisations
  \end{abstract}
  
  \section{Introduction}
  We say that a real valued function $f$ on $\reals^n$ is in the \emph{non-homogeneous
  Hölder class} of order $s,$ with $0 < s < 1,$ denoted $f \in \lip{s}(\reals^n)$
  or just $f \in \lip{s}$ when there is no ambiguity, if it is uniformly bounded and
  \begin{equation}
    \label{eq:HolderCondition}
    \sup_{x,\, 0 < |y| < 1} \frac{|f(x+y)-f(x)|}{|y|^s} + \norm{f}{\Lp{\infty}(\reals^n)} < \infty,
  \end{equation}
  where $|x|$ stands for the euclidean norm of $x \in \reals^n.$
  In order to define all smoothness classes $s > 0$ we first note that in the range $s \in (0,2)$
  the norm in $\lip{s}$ may be defined by using second differences:
  \begin{equation}
    \label{eq:HolderCondSecondDiff}
    \norm{f}{\lip{s}} \coloneqq \sup_{x,\, 0 < |y| < 1} \frac{|f(x+y) -2f(x) + f(x-y)|}{|y|^s}
                                + \norm{f}{\Lp{\infty}(\reals^n)},
  \end{equation}
  and the norms~\eqref{eq:HolderCondition} and~\eqref{eq:HolderCondSecondDiff} are equivalent for $s \in (0,1).$
  In general, if $s > 0$ and $s = m + t,$ with $m \in \naturals_0$ and $0 < t \leq 1,$ we define
  the Hölder class $\lip{s}$ as the space of bounded functions that are $m$ times continuously differentiable
  with all their derivatives of order $m$ belonging to $\lip{t}.$
  The norm of $f \in \lip{s}$ may be defined as
  $\norm{f}{\lip{s}} \coloneqq \sup_{|\alpha| = m} \norm{\partial^\alpha f}{\lip{t}} + \norm{f}{\Lp{\infty}}(\reals^n),$
  which is equivalent to~\eqref{eq:HolderCondSecondDiff} for $s \in (0,2).$
  
  In the case $s = 1$ we obtain the so called \emph{non-homogeneous Zygmund class}.
  This space is often denoted by $\zyg(\reals^n)$ or just $\zyg$
  -- thus as Banach spaces $\zyg(\reals^n) = \lip{1}(\reals^n)$ --
  and we denote the corresponding seminorm by
  \begin{equation}
    \label{eq:ZygmundSeminorm}
    \norm{f}{\zygsemi} \coloneqq \sup_{x,\, 0 < |y| < 1} \frac{|f(x+y) - 2f(x) + f(x-y)|}{|y|}.
  \end{equation}
  If $f$ is a polynomial of degree at most $1,$ then we have that $\norm{f}{\zygsemi} = 0.$
  Note that the norm in $\zyg$ is given by $\norm{f}{\zyg} = \norm{f}{\zygsemi} + \norm{f}{\Lp{\infty}}.$
  
  The Zygmund class is the natural definition of the H\"older classes $\lip{s}$ for $s = 1$
  as is suggested by~\eqref{eq:HolderCondSecondDiff},
  and this definition is more properly validated  by the coincidence $\lip{s} = \besov{s}{\infty}{\infty}$ for all $s > 0,$
  where $\besov{s}{p}{q}$ stands for the classical Besov space.
  The classes $\lip{s}$ have numerous applications, e.g. to PDE's and polynomial approximation or Calderón-Zygmund theory,
  and have been extensively studied (see for instance \cite{ref:Zygmund}, \cite[Chapter~V]{ref:Stein}, \cite{ref:TriebelFunctionSpaces-I},
  \cite{ref:MakarovSmoothMeasures}, \cite{ref:DonaireLlorenteNicolau}).
  
  From now on, we will restrict ourselves to the spaces $\lip{s}$ for $0 < s \leq 1,$
  although we could also formulate our results for all $s > 0,$
  but this would not bring anything essentially new apart from some additional non-interesting technicalities.
  
  A locally integrable function $f$ on $\reals^n$ is said to have \emph{bounded mean oscillation,}
  $f \in \BMO(\reals^n)$ or just $f \in \BMO,$ if
  \begin{equation*}
    \norm{f}{\BMO} \coloneqq \sup_Q \left(\frac{1}{|Q|} \int_Q |f(x)-f_Q|^2 \, dx\right)^{1/2} < \infty,
  \end{equation*}
  where $Q$ ranges over all finite cubes with sides parallel to the axes in $\reals^n$ and where
  \begin{equation*}
    f_Q \coloneqq \frac{1}{|Q|} \int_Q f(x)\, dx
  \end{equation*}
  is the average of $f$ on $Q.$
  We refer to \cite{ref:SteinHarmonicAnalysis} for basic facts on the space $\BMO.$
  
  Actually, as we will in general deal with non-homogeneous spaces,
  we are more interested in the \emph{non-homogeneous $\BMO$ space},
  denoted by $\bmo(\reals^n)$ following Triebel's convention.
  To be precise, consider the collection $\dyadic_0$ of dyadic cubes in $\reals^n$ of side-length 1,
  that is, the set of cubes $Q$ of the form
  \begin{equation*}
      Q = \{x \in \reals^n\colon x-k \in [0,1]^n\}
  \end{equation*}
  for some $k \in \integers^n.$
  The Banach space $\bmo(\reals^n)$ consists of all functions $f \in \BMO(\reals^n)$
  that are locally uniformly in $\Lp{2},$ i.e. those functions such that
  \begin{equation}
    \label{eq:bmoNorm}
    \norm{f}{\bmo} \coloneqq \norm{f}{\BMO}
                             + \sup_{Q \in \dyadic_0} \left(\int_Q |f(x)|^2\, dx\right)^{1/2}
                             < \infty.
  \end{equation}
  We also need to define the scale of non-homogeneous $\bmo$-Sobolev spaces.
  We first fix our convention for the Fourier transform.
  For a given function $f \in \Lp{1}(\reals^n),$ we define its Fourier transform $\fourier{f} = \hat{f}$ using the convention
  \begin{equation*}
      \fourier{f}(\xi) \coloneqq \int_{\reals^n} f(x) e^{-i x\cdot\xi}\, dx, \quad \xi \in \reals^n.
  \end{equation*}
  Given $r \in \reals,$ the standard lift operator $\opbessel{r}$ is defined via the Bessel potential:
  for a Schwartz distribution $f$ one sets
  \begin{equation*}
    \opbessel{r} f(x) \coloneqq \invfourier{(1+|\xi|^2)^{-r/2} \fourier{f}(\xi)}(x).
  \end{equation*}
  Due to the lift property of this operator one has e.g. for the Hölder spaces the relation
  $\opbessel{r} \lip{s}(\reals^n) = \lip{s+r}(\reals^n)$ for all $s,r > 0,$
  and this equality may be used to define the space $\lip{s}(\reals^n)$ also for $s \leq 0.$
  The Banach space $\Jbmo{s}(\reals^n)$ consists of all functions $f \in \locLp{2}(\reals^n)$
  such that $\opbessel{-s}f \in \bmo(\reals^n),$ and we set
  \begin{equation*}
    \norm{f}{\Jbmo{s}} \coloneqq \norm{\opbessel{-s}f}{\bmo}.
  \end{equation*}
  Here, if one replaces the condition $f \in \locLp{2}$ by $f \in \schwartz',$
  the above definition extends to all $s \in \reals.$
  As we will see later on, for each $s > 0$ we have the strict inclusion
  \begin{equation*}
    \Jbmo{s} \subsetneq \lip{s}.
  \end{equation*}
  
  In \cite{ref:NicolauSoler} the authors give an estimate in terms of second differences
  for the distance of a compactly supported function $f \in \zyg$
  to the subspace $\IBMO{1}(\reals)$
  consisting of continuous functions with distributional derivative in $\BMO$
  -- this space is the homogeneous counterpart of the space $\Jbmo{1}$ we defined previously.
  The result in \cite{ref:NicolauSoler} is analogous to that obtained by P.~Ghatage and D.~Zheng in \cite{ref:GhatageZheng},
  where they estimate the distance of a function in the Bloch space to the subspace of
  analytic $\BMO$ functions in terms of its hyperbolic derivative.
  In this connection we mention also the paper of J.~Garnett and P.~Jones \cite{ref:GarnettJonesDistanceInBMO},
  where a formula for the distance from $\BMO$ to bounded functions is established.
  
  In order to state the result in \cite{ref:NicolauSoler} and our generalisation,
  it is useful to define for a given continuous function $f$ on $\reals^n$
  its (maximal) \emph{second difference} at point $x \in \reals^n$ and scale $y > 0$ by the quantity
  \begin{equation*}
    \Delta_2f(x,y) \coloneqq \sup_{|h| = y} |f(x+h) - 2f(x) + f(x-h)|.
  \end{equation*}
  We will always consider these second differences to be defined on the upper half-space,
  that is $(x,y) \in \halfspace{n}.$
  Moreover, if $X \subset \lip{s}$ is a subspace of the Hölder class of order $s,$
  we define the distance of $f \in \lip{s}$ to $X$ as
  \begin{equation*}
    \dist_s(f,X) \coloneqq \inf_{g \in X} \norm{f-g}{\lip{s}}.
  \end{equation*}
  The estimate in \cite{ref:NicolauSoler} is given in terms of a Carleson type condition
  on the set where the quantities $\Delta_2 f(x,y)/y$ are large, for $x \in \reals$ and $y > 0.$
  Namely, for $f \in \zyg(\reals)$ and $\varepsilon > 0$ consider the set
  \begin{equation*}
    S(f,\varepsilon) \coloneqq \{(x,y) \in \reals^2_+\colon \Delta_2f(x,y) > \varepsilon y\}
  \end{equation*}
  in the upper half-plane and the quantity
  \begin{equation*}
      M(f,\varepsilon) \coloneqq
      \sup_I \frac{1}{|I|} \int_I\int_0^{|I|} \chi_{S(f,\varepsilon)}(x,y)\, \frac{dy\, dx}{y},
  \end{equation*}
  where $I$ ranges over all intervals.
  Recall here that
  \begin{equation*}
    \IBMO{1}(\reals) = \{f\in\continuous(\reals)\colon f' \in \BMO\},
  \end{equation*}
  where the derivative is taken in the distributional sense.
  Then, the following holds
  \begin{thmA}[\cite{ref:NicolauSoler}]
    \label{thm:ZygmundIBMONicolauSoler}
    Let $f$ be a compactly supported function in $\zyg(\reals).$ 
    Then,
    \begin{equation*}
      \dist_1 (f,\IBMO{1}) \simeq \inf \{\varepsilon > 0\colon M(f,\varepsilon) < \infty\}.
    \end{equation*}
  \end{thmA}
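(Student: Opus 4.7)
My plan is to split the claimed equivalence into two one-sided bounds; write $E(f) \coloneqq \inf\{\varepsilon > 0 : M_S(f,\varepsilon) < \infty\}$ for brevity, so that I want to prove $E(f) \lesssim \dist_1(f,\IBMO{1}) \lesssim E(f)$.

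For the easy direction, take $g \in \IBMO{1}(\reals)$ with $\norm{f-g}{\zyg} < \delta$. The triangle inequality applied to the quantity $\Delta_2$ gives $\Delta_2 f(x,y) \leq 2\delta y + \Delta_2 g(x,y)$ for $0 < y < 1$, so $S(f,\varepsilon)\cap\{y<1\} \subset \{(x,y) : \Delta_2 g(x,y) > (\varepsilon-2\delta)y\}$ whenever $\varepsilon > 2\delta$; the part $\{y \geq 1\}$ is harmless because there $\Delta_2 f(x,y)/y \leq 4\norm{f}{\Lp{\infty}}/y$ decays. A classical Carleson-measure characterization of $\BMO$ applied to $g'$ gives $\int_I\int_0^{|I|} (\Delta_2 g(x,y)/y)^2 \frac{dy\,dx}{y} \lesssim \norm{g'}{\BMO}^2|I|$ for every interval $I$. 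Chebyshev's inequality converts this $\Lp{2}$ Carleson estimate into the $\Lp{\infty}$-type packing condition defining $M_S$, and letting $\delta$ decrease to the distance yields $E(f) \lesssim \dist_1(f,\IBMO{1})$.

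For the harder direction, fix $\varepsilon > E(f)$; the goal is to build $g \in \IBMO{1}$ with $\norm{f-g}{\zyg} \lesssim \varepsilon$. I would expand $f = \sum_Q c_Q \psi_Q$ in a smooth, compactly supported wavelet basis indexed by dyadic intervals, using two standard dictionary facts: (a) $\norm{h}{\zygsemi} \simeq \sup_Q |Q|^{-3/2}|\langle h,\psi_Q\rangle|$, and (b) membership in $\IBMO{1}$ corresponds to $\sum_{Q\subset I}|Q|^{-1}|\langle h,\psi_Q\rangle|^2 \lesssim |I|$ for every interval $I$. Label a coefficient \emph{bad} when $|c_Q| > C\varepsilon|Q|^{3/2}$ for a suitable absolute $C$, \emph{good} otherwise, and split $f = g + h$ via the corresponding wavelet projections. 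Then $\norm{h}{\zyg} \lesssim \varepsilon$ by (a). On the other hand, comparing $c_Q$ with $\Delta_2 f$ at a dyadic reference point $(x_Q,|Q|)$ shows that each bad $Q$ forces a controlled portion of $S(f,c\varepsilon)$ to sit inside the Carleson box over $Q$, so the hypothesis $M_S(f,\varepsilon) < \infty$ produces a Carleson packing estimate on the bad family; combined with the a priori bound $|c_Q|/|Q|^{3/2} \lesssim \norm{f}{\zyg}$, criterion (b) yields $g \in \IBMO{1}$, whence $\dist_1(f,\IBMO{1}) \leq \norm{h}{\zyg} \lesssim \varepsilon$.

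The main obstacle is the two-way correspondence between the continuous geometric set $S(f,\varepsilon)$ and the discrete family of bad wavelet coefficients. One single large coefficient $c_Q$ produces second differences $\Delta_2 f(x,y)$ of size $|c_Q|/|Q|^{3/2}$ not only inside $Q$ but over a dilated neighbourhood at scales $y \sim |Q|$, while conversely a large pointwise second difference can a priori arise from a sum of many smaller coefficients across several scales. Controlling this tail interaction without degrading the absolute constants is the technical heart of the argument; it is handled by choosing wavelets with enough smoothness and decay, and by regrouping bad intervals into Carleson trees through a stopping-time rule read off from $S(f,\varepsilon)$. Once this dictionary is in place, both directions reduce to routine square-function and packing estimates, and the compact support assumption on $f$ is used only to start the stopping-time iteration at a single ambient dyadic interval.
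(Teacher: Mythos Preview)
This statement is quoted from \cite{ref:NicolauSoler} rather than proved in the paper; the paper only records that the original argument ``makes use of Strichartz's characterisation of homogeneous $\BMO$-Sobolev spaces in terms of second differences and exhibits a rather intricate transfer from dyadic spaces to the non-dyadic situation.'' What the present paper supplies is a \emph{new} proof, obtained as the case $n=s=1$ of Theorem~\ref{thm:DistanceJbmoDifferences}, via the wavelet route of Theorem~\ref{thm:DistanceJbmoWavelets} combined with the geometric comparison of the sets $S$ and $T$ carried out in Sections~\ref{sec:PropertiesOfTheSets}--\ref{sec:Equivalences}. Your plan is much closer to this new approach than to the original one: your easy direction (Strichartz plus Chebyshev) is fine, and your hard direction---keeping the wavelet coefficients with $|c_Q|>C\varepsilon|Q|^{3/2}$ and discarding the rest---is exactly the construction in the proof of Theorem~\ref{thm:DistanceJbmoWavelets}.

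The genuine gap is in your bridge from $M_S(f,\varepsilon)<\infty$ to a Carleson packing bound on the bad intervals. You assert that a large $c_Q$ ``forces a controlled portion of $S(f,c\varepsilon)$ to sit inside the Carleson box over $Q$,'' and then sum. This is too optimistic. What one can actually prove (this is the contrapositive of Lemma~\ref{lemma:DifferenceBoundsWavelets}) is only that $\Delta_2 f$ must exceed $c\varepsilon y$ somewhere in a hyperbolic $R$-neighbourhood of $T(Q)$, where $R$ depends on $\norm{f}{\zyg}/\varepsilon$ and can be arbitrarily large; there is no reason the witness region should sit inside the Carleson box of $Q$ itself. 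The resulting inclusion $T(\varepsilon)\subset S(c\varepsilon)_R$ does not by itself yield a packing estimate, since the $R$-enlarged regions attached to different bad $Q$'s overlap with a multiplicity growing with $R$. The paper handles this with a separate mechanism: it first shows (Lemmas~\ref{lemma:DifferencesComparisonHolder}--\ref{lemma:DifferencesComparisonZygmund}) that $(x,y)\mapsto y^{-s}\Delta_2 f(x,y)$ is continuous in the hyperbolic metric, so that $S(\varepsilon')$ already contains a full hyperbolic $\delta$-neighbourhood of $S(\varepsilon)$ for $\varepsilon'$ slightly smaller; this ``plumpness'' then feeds into a dilation lemma (Lemma~\ref{lemma:Dilations}, applied as Lemma~\ref{lemma:BadDifferencesDilation}) which says $M(A)<\infty\Rightarrow M(A_R)<\infty$ for plump $A$. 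That is the missing idea in your sketch. Your suggested alternative---``regrouping bad intervals into Carleson trees through a stopping-time rule read off from $S(f,\varepsilon)$''---does not supply this geometric input, and as written it is not clear what quantity the stopping time would be run on or how it would control the overlap of the $R$-neighbourhoods.
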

  \noindent Some remarks are in order here.
  First of all, the precise theorem in \cite{ref:NicolauSoler} uses only the homogeneous seminorm $\norm{\cdot}{\zygsemi}$
  when defining the distance of a function to the subspace in question.
  However, this makes no real difference since $f$ is compactly supported.
  Secondly, Theorem~\ref{thm:ZygmundIBMONicolauSoler} involves the homogeneous $\BMO$-Sobolev space,
  but one may observe that the non-homogeneous norm is equivalent for functions supported on a fixed compact subset.
  
  The proof in \cite{ref:NicolauSoler} makes use of R.~Strichartz's \cite{ref:Strichartz} characterisation
  of homogeneous $\BMO$-Sobolev spaces in terms of second differences and exhibits a rather intrincate transfer
  from dyadic spaces to the non-dyadic situation.
  This method appears to be restricted to dimension $n = 1.$
  
  In this paper, our goal is to generalise Theorem~\ref{thm:ZygmundIBMONicolauSoler} in two directions:
  first of all, we will treat functions on $\reals^n$ for arbitrary $n \geq 1$ and, secondly,
  we consider not just the Zygmund class but functions in all classes $\lip{s}(\reals^n)$ for $0 < s \leq 1.$
  As a byproduct we provide a new approach to Theorem~\ref{thm:ZygmundIBMONicolauSoler}.
  
  We now turn to precise formulation of the results of the present paper.
  Some of them will be expressed in terms of a Carleson type measure for the size of subsets of the upper half-space,
  analogously to Theorem~\ref{thm:ZygmundIBMONicolauSoler},
  and it is useful to have a general definition for this purpose.
  Consider the set $\dyadic$ of dyadic cubes of side-length at most one in $\reals^n,$
  that is the set of cubes of the form
  \begin{equation*}
      Q = \{x \in \reals^n\colon 2^jx-k \in [0,1]^n\}, \quad j \in \naturals_0, k \in \integers^n.
  \end{equation*}
  For a given measurable subset $A \subseteq \halfspace{n},$ we define the quantity $M(A)$ by
  \begin{equation}
    \label{eq:CarlesonSetMeasure}
    M(A) \coloneqq \sup_{Q \in \dyadic} \frac{1}{|Q|} \int_Q\int_0^{l(Q)} \chi_A(x,y)\, \frac{dy\, dx}{y}.
  \end{equation}
  Note that the finiteness of $M(A)$ is equivalent to $y^{-1} \chi_A(x,y)\, dx\, dy$ being a Carleson measure
  in the upper half-space (see \cite[Section~II.2]{ref:SteinHarmonicAnalysis}).
  
  Towards our first characterisation,
  given a function $f \in \lip{s}(\reals^n),$ with $0 < s \leq 1,$ consider the set
  \begin{equation*}
    S(s,f,\varepsilon) \coloneqq \{(x,y) \in \halfspace{n}\colon \Delta_2f(x,y) > \varepsilon y^s\}.
  \end{equation*}
  This should be thought as the set of points $(x,y)$ in the upper half-space for which its associated second
  difference is large with respect to the corresponding scale.
  The following result generalises Theorem~\ref{thm:ZygmundIBMONicolauSoler} for arbitrary dimension $n \geq 1$
  and smoothness in the range $0 < s \leq 1.$
  \begin{thm}
    \label{thm:DistanceJbmoDifferences}
    Let $0 < s \leq 1,$ and consider a function $f \in \lip{s}(\reals^n).$
    Then,
    \begin{equation}
      \label{eq:DistanceJbmoDifferences}
      \dist_s (f,\Jbmo{s}) \simeq \inf \{\varepsilon > 0\colon M(S(s,f,\varepsilon)) < \infty\}.
    \end{equation}
  \end{thm}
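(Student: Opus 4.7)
The strategy is to prove the two inequalities in \eqref{eq:DistanceJbmoDifferences} separately.

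For the upper bound $\dist_s(f,\Jbmo{s})\gtrsim \inf\{\varepsilon\colon M(S(s,f,\varepsilon))<\infty\}$, the natural tool is a Strichartz-type characterization of $\Jbmo{s}$ in terms of second differences: $g\in\Jbmo{s}$ if and only if $y^{-2s-1}|\Delta_2 g(x,y)|^2\chi_{\{0<y<1\}}(x,y)\,dy\,dx$ is a Carleson measure on $\halfspace{n}$, together with a local $\Lp{2}$ control. Choose $g\in\Jbmo{s}$ with $\norm{f-g}{\lip{s}}$ within a factor of two of $\dist_s(f,\Jbmo{s})$; then $\Delta_2(f-g)(x,y)\lesssim\norm{f-g}{\lip{s}}\,y^s$ for $0<y<1$. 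Hence for $(x,y)\in S(s,f,\varepsilon)$ with $\varepsilon$ a sufficiently large multiple of $\dist_s(f,\Jbmo{s})$, the triangle inequality forces $\Delta_2 g(x,y)\gtrsim \varepsilon y^s$, and Chebyshev applied to the Carleson measure of $g$ controls $M(S(s,f,\varepsilon))$.

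For the lower bound I would rely on wavelet characterizations. Fix a sufficiently regular compactly supported wavelet basis $\{\psi_I\}_{I\in\dyadic}$ with enough vanishing moments. Then $f\in \lip{s}$ is equivalent to $|c_I|:=|\langle f,\psi_I\rangle|\lesssim |I|^{1/2+s/n}$ (plus control of the scaling-function coefficients by $\norm{f}{\Lp{\infty}}$), while $g\in \Jbmo{s}$ is equivalent to a Carleson-type condition
\begin{equation*}
\sum_{I\subset Q}|c_I|^2\,|I|^{-2s/n}\lesssim |Q|,\qquad Q\in\dyadic,
\end{equation*}
plus a local $\Lp{2}$ bound. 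Given $\varepsilon>0$, set $E=\{I\in\dyadic:|c_I|>\varepsilon|I|^{1/2+s/n}\}$ and define $g=\sum_{I\in E}c_I\psi_I$. Then $\norm{f-g}{\lip{s}}\lesssim \varepsilon$ by the $\lip{s}$ wavelet characterization applied to $f-g=\sum_{I\notin E}c_I\psi_I$, while the $\lip{s}$ bound on $f$ itself gives $|c_I|^2|I|^{-2s/n}\lesssim |I|$, so that $g\in \Jbmo{s}$ reduces to the packing estimate
\begin{equation*}
\sum_{I\in E,\,I\subset Q}|I|\lesssim |Q|,\qquad Q\in\dyadic.
\end{equation*}

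The heart of the argument, and in my view the main obstacle, is translating this dyadic packing of $E$ into the continuous geometric condition given by $M(S(s,f,\varepsilon))$. The key lemma to prove is that $I\in E$ forces the existence of $(x,y)$ with $y\sim l(I)$ and $x$ in a fixed enlargement of $I$ for which $\Delta_2 f(x,y)\gtrsim \varepsilon\,l(I)^{s}$; this should follow by testing $\psi_I$ against the second-difference operator and exploiting its vanishing moments and localization. A fixed proportion of the Carleson box above each $I\in E$ then sits inside $S(s,f,c\varepsilon)$ for a constant $c$ depending on the chosen wavelet, and a standard covering argument converts the finiteness of $M(S(s,f,c\varepsilon))$ into the required packing bound. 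This dyadic-to-continuous bridge, absent from the one-dimensional approach in \cite{ref:NicolauSoler}, is precisely what allows the wavelet method to extend to all $n\geq 1$ and $0<s\leq 1$. Finally, the local $\Lp{2}$ requirement ensuring $g\in\Jbmo{s}$ rather than merely in its homogeneous counterpart is handled by the fact that $f\in\lip{s}\subset\Lp{\infty}$ and $f-g\in\lip{s}$, so that $g$ is locally bounded.
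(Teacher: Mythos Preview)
Your lower-bound construction (set $g=\sum_{I\in E}c_I\psi_I$ and reduce $g\in\Jbmo{s}$ to a packing condition on $E$) coincides with the paper's Theorem~\ref{thm:DistanceJbmoWavelets} combined with Lemma~\ref{lemma:DifferenceBoundsWavelets}, which is indeed the core of the matter. The paper proves your key lemma by writing $\widehat{\psi_I}=-\widehat{h}\,(1-\widehat{g})$ for a radial bump $g$ supported on an annulus and an auxiliary $h$ with $(1+|x|)^{-(n+1)}$ decay (this forces wavelet regularity $r>n+3$); then $c_I$ becomes a weighted integral of genuine second differences of $f$, and the far-away part is absorbed using $\norm{f}{\lip{s}}$. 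Two points your sketch glosses over: the enlargement of $I$ in which the large second difference must live is not absolute but has size $R=R(\norm{f}{\lip{s}},\varepsilon)$; and going from ``one point $(x,y)$ near $I$ lies in $S(s,f,c\varepsilon)$'' to ``a fixed proportion of the box lies in $S$'' requires a quantitative continuity estimate for $(x,y)\mapsto\Delta_2 f(x,y)$ (the paper's Lemmas~\ref{lemma:DifferencesComparisonHolder}--\ref{lemma:DifferencesComparisonZygmund}, the case $s=1$ needing a Littlewood--Paley decomposition and picking up a logarithm) together with a hyperbolic-dilation ``plumpness'' lemma (Lemma~\ref{lemma:Dilations}). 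Your ``standard covering argument'' hides exactly these ingredients.

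Your upper bound takes a genuinely different route. You invoke a Strichartz-type statement that $g\in\Jbmo{s}$ forces $y^{-2s-1}(\Delta_2 g(x,y))^2\,dx\,dy$ to be Carleson, with $\Delta_2$ the \emph{maximal} second difference over the sphere $|h|=y$. With the supremum rather than an average over $h$, this is not an off-the-shelf result for $\Jbmo{s}(\reals^n)$, and the authors explicitly remark that the earlier Strichartz-based approach of \cite{ref:NicolauSoler} appears restricted to $n=1$. The paper sidesteps this entirely by introducing a third set $D(s,f,\varepsilon)$ built from $y^{2-s}\lvert\partial^2_y\poisson{f}\rvert$ and proving a cycle of inclusions $T(\varepsilon)\subset S(c\varepsilon)_R$, $S(\varepsilon)\subset D(c\varepsilon)_R$, $D(\varepsilon)\subset T(c\varepsilon)_R$ among hyperbolic enlargements; the direction you need for the upper bound then comes from composing the last two with the dilation lemma, using only classical Poisson-kernel estimates and the wavelet expansion. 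If your Strichartz input can indeed be established for the maximal second difference in all dimensions, your route would be shorter for this particular theorem, though it would not deliver Theorem~\ref{thm:DistanceJbmoDerivatives} as a byproduct the way the paper's cyclic argument does.
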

  
  A main tool for us will be the wavelet characterisation of the function spaces involved.
  For that end we next recall the basic properties of wavelets.
  Consider the space $\Lp{2}(\reals^n)$ of square integrable functions on $\reals^n.$
  It is known that, for any $r \in \naturals_0,$ there exist compactly supported real-valued functions
  \begin{equation*}
      \varphi \in \continuous_0^r(\reals^n) \quad
      \text{and} \quad
      \psi_l \in \continuous_0^r(\reals^n), \quad
      \text{with } 1 \leq l \leq 2^n-1,
  \end{equation*}
  such that their dyadic translations and dilations form an orthonormal basis of $\Lp{2}(\reals^n).$
  To be more precise, the set
  \begin{equation*}
    \{\varphi(x-k)\colon k \in \integers^n\}
    \cup
    \{2^{jn/2} \psi_l(2^jx-k)\colon 1 \leq l \leq 2^n-1, j \in \naturals_0, k \in \integers^n\}
  \end{equation*}
  forms an orthonormal basis of $\Lp{2}(\reals^n).$
  Moreover, we additionally have that
  \begin{equation*}
    \int_{\reals^n} x^\alpha \psi_l(x)\, dx = 0
  \end{equation*}
  for any multi-index $\alpha \in \naturals_0^n$ such that $|\alpha| \leq r$ and for every $1 \leq l \leq 2^n-1.$
  Such a set of functions is called a wavelet basis of regularity $r.$
  For a detailed explanation on how to construct such bases see, for instance, \cite{ref:MeyerWaveletsOperators}.
  
  It will be useful to index the wavelets in terms of dyadic cubes.
  Thus, recall that we denote by $\dyadic$ the set of dyadic cubes on $\reals^n$ of side-length at most one and,
  for $j \in \naturals_0,$ let us denote the set of cubes $Q \in \dyadic$ of side length $l(Q) = 2^{-j}$ by $\dyadic_j.$
  Assuming that $Q \in \dyadic_j,$ we let $\tau(Q) \coloneqq j$ denote the dyadic level of $Q.$
  Given $k \in \integers^n,$ let $Q = \{x \in \reals^n\colon x-k \in [0,1]^n\} \in \dyadic_0$
  and denote
  \begin{equation*}
    \varphi_Q(x) = \varphi(x-k).
  \end{equation*}
  Analogously, given $j \in \naturals_0$ and $k \in \integers^n,$
  let $Q = \{x \in \reals^n\colon 2^jx-k \in [0,1]^n\} \in \dyadic_j$ and write
  \begin{equation*}
    \psi_{(l,Q)}(x) = 2^{jn/2} \psi_l(2^jx-k)
  \end{equation*}
  for $1 \leq l \leq 2^n-1.$
  For future convenience, we define $\mathcal{Q} = \{(l,Q)\colon 1 \leq l \leq 2^n-1, Q \in \dyadic\}$ and,
  for $\omega = (l,Q) \in \mathcal{Q},$ we denote $|\omega| \coloneqq \tau(Q).$
  In addition, we use the notation $\mathcal{Q}_j = \{\omega \in \mathcal{Q}\colon |\omega| = j\}$ for $j \in \naturals_0.$
  Finally, if $Q \in \dyadic,$ then we denote by $\mathcal{Q}(Q)$ the set of $(l,P) \in \mathcal{Q}$
  for which $P \subseteq Q.$
  
  Consider a wavelet basis $\{\varphi_Q\colon Q \in \dyadic_0\} \cup \{\psi_\omega\colon \omega \in \mathcal{Q}\}$
  of regularity $r.$
  Let $f$ be a function in $\lip{s}$ for some $0 < s \leq 1.$
  The wavelet coefficients of $f$ are
  \begin{equation*}
      d_Q(f) = \int_{\reals^n} f(x) \overline{\varphi_Q(x)}\, dx, \quad Q \in \dyadic_0,
  \end{equation*}
  and
  \begin{equation*}
    c_{(l,Q)}(f) = c_\omega(f) \coloneqq \int_{\reals^n} f(x)\overline{\psi_\omega(x)}\, dx,
    \quad (l,Q) = \omega \in \mathcal{Q}.
  \end{equation*}
  From now on, since we only consider real-valued wavelets,
  we might omit the complex conjugation in the definition of the wavelet coefficients.
  Also, these coefficients can actually be defined for a much wider class of distributions,
  but we will not require this level of generality.
  In \cite{ref:LemarieMeyer}, P.~Lemarié and Y.~Meyer characterise when a wavelet series $f$ is in the space $\lip{s},$
  for $s > 0,$ in terms of its wavelet coefficients $\{d_Q(f)\}$ and $\{c_\omega(f)\}.$
  See also \cite{ref:AimarBernardisWaveletMeanOscillation} and \cite[Section~6.4]{ref:MeyerWaveletsOperators}
  for a more detailed explanation.
  \begin{thmA}[P.~Lemarié, Y.~Meyer]
    \label{thm:LipWavelet}
    Let $s > 0,$
    and consider a wavelet basis $\{\varphi_Q\colon Q \in \dyadic_0\} \cup \{\psi_\omega\colon \omega\in\mathcal{Q}\}$
    of regularity $r > s.$
    The wavelet series
    \begin{equation*}
      f(x) = \sum_{Q \in \dyadic_0} d_Q(f) \varphi_Q(x)
      + \sum_{\omega\in\mathcal{Q}} c_\omega(f) \psi_\omega(x), \quad x\in\reals^n,
    \end{equation*}
    is in $\lip{s}(\reals^n)$ if and only if
    \begin{equation}
      \label{eq:ZygmundWaveletCondition}
      \sup_{Q \in \dyadic_0} |d_Q(f)|
      + \sup_{\omega \in \mathcal{Q}} 2^{|\omega|(n/2+s)}|c_\omega(f)| < \infty.
    \end{equation}
    Moreover, if $\norm{\{d(f),c(f)\}}{s}$ is the left-hand side in~\eqref{eq:ZygmundWaveletCondition},
    then $\norm{\{d(f),c(f)\}}{s} \simeq \norm{f}{\lip{s}}.$
  \end{thmA}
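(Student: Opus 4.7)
The plan is to prove both implications by exploiting the three defining features of the chosen wavelet basis: compact support, smoothness of order $r > s$, and the vanishing moments $\int x^\alpha \psi_l(x)\, dx = 0$ for $|\alpha| \leq r$. Throughout I write $j = |\omega|$ for the scale parameter, and I use that $\norm{\psi_\omega}{\Lp{1}} \simeq 2^{-jn/2}$ and $\norm{\psi_\omega}{\Lp{\infty}} \simeq 2^{jn/2}$.

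For the \emph{necessity} direction, the scaling coefficients are controlled trivially by $|d_Q(f)| \leq \norm{f}{\Lp{\infty}} \norm{\varphi_Q}{\Lp{1}} \lesssim \norm{f}{\lip{s}}$. For the detail coefficients I select a point $x_\omega$ in the support of $\psi_\omega$ and use $\int \psi_\omega = 0$ to write $c_\omega(f) = \int (f(x) - f(x_\omega))\, \psi_\omega(x)\, dx$. When $s \in (0, 1)$ the Hölder condition immediately yields $|c_\omega(f)| \lesssim 2^{-js} \norm{\psi_\omega}{\Lp{1}} \lesssim 2^{-j(n/2+s)} \norm{f}{\lip{s}}$. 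In the Zygmund endpoint $s = 1$ this first-difference estimate would cost a logarithmic factor, so I additionally use the vanishing first moment $\int (x - x_\omega)\, \psi_\omega(x)\, dx = 0$ (which follows from $\int x_i \psi_l(x)\, dx = 0$): splitting $\psi_\omega$ into its even and odd parts with respect to reflection through $x_\omega$, the even piece is controlled by the genuine second-difference seminorm $\norm{f}{\zygsemi}$, and the odd piece, after subtracting an admissible linear polynomial provided by the vanishing first moment, enjoys a similar bound, giving $|c_\omega(f)| \lesssim 2^{-j(n/2+1)} \norm{f}{\zyg}$.

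For the \emph{sufficiency} direction, assume the bound on the coefficients. The scaling part $\sum_{Q \in \dyadic_0} d_Q(f)\, \varphi_Q$ is locally a finite sum, inherits the regularity of $\varphi$, and hence lies in $\lip{s}$ trivially. For the detail part I group by scale and set $g_j = \sum_{\omega \in \mathcal{Q}_j} c_\omega(f)\, \psi_\omega$. The finite overlap of supports at a fixed scale gives $\norm{g_j}{\Lp{\infty}} \lesssim 2^{-js}$, which is summable in $j$ and produces a bounded limit $g = \sum_j g_j$. To estimate the second difference of $g$ at scale $|h|$, I use two bounds at each level: the trivial $\Lp{\infty}$ bound $|g_j(x+h) - 2 g_j(x) + g_j(x-h)| \lesssim 2^{-js}$, and the smoothness bound $\lesssim |h|^2 \norm{D^2 g_j}{\Lp{\infty}} \lesssim |h|^2\, 2^{j(2-s)}$, available as soon as $r \geq 2$. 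Splitting the sum over $j$ at the threshold $j_0 \sim \log_2(1/|h|)$ and taking the minimum of the two estimates at each scale turns both pieces into geometric series controlled by $|h|^s$, as required.

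The \emph{main obstacle} is the Zygmund endpoint. For $s = 1$ the naive first-difference estimate loses a logarithmic factor, and one must genuinely exploit the vanishing first moment through a careful symmetrisation to read off the Zygmund second-difference seminorm. Once this endpoint is handled, the remainder of the argument interpolates cleanly between the $\Lp{\infty}$ bound and a second-derivative bound for the wavelet pieces at each scale, so the proof goes through uniformly throughout the range $0 < s \leq 1$.
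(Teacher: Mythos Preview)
The paper does not prove this statement; it is quoted as a known theorem of Lemarié and Meyer, with the reader directed to the original paper and to Meyer's book for a proof. There is therefore no argument in the paper itself to compare your attempt against.

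Your sketch follows the standard direct approach one finds in those references and is essentially correct. The sufficiency direction and the necessity for $0 < s < 1$ are fine as written. For the Zygmund endpoint $s = 1$ your symmetrisation idea is on the right track, but the claim that the odd piece ``enjoys a similar bound after subtracting an admissible linear polynomial'' hides a subtlety: for a general Zygmund function there is \emph{no} vector $a$ making
\[
  |f(x_\omega+h) - f(x_\omega-h) - 2a\cdot h| \lesssim \norm{f}{\zyg}\, |h|
\]
hold uniformly for $|h| \lesssim 2^{-j}$ (take $f(t) = t\log|t|$ on the line). What does work is to choose $a$ adapted to the scale $2^{-j}$; the resulting error is then at worst of order $|h|\log(2^{-j}/|h|)$ on the support of $\psi_\omega$, and after rescaling this is still integrable against $|\psi_\omega|$ with the required outcome $\lesssim 2^{-j(n/2+1)}$. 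So your outline can be completed, but the odd part needs this extra step rather than a pointwise bound. An alternative route, and the one most textbook treatments take, bypasses second differences entirely and goes through the Littlewood--Paley characterisation $\norm{\Delta_j f}{\Lp{\infty}} \lesssim 2^{-js}$ of $\lip{s} = \besov{s}{\infty}{\infty}$, using almost-orthogonality of the wavelets with the Littlewood--Paley pieces; this handles all $s > 0$ uniformly without a separate endpoint argument.
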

  
  In \cite{ref:LemarieMeyer}, the authors also give a wavelet characterisation
  for functions in the space $\BMO$ (see also \cite{ref:AimarBernardisWaveletMeanOscillation},
  \cite[Section~5.6]{ref:MeyerWaveletsOperators} and \cite[Section~IV.4.5]{ref:SteinHarmonicAnalysis}
  for detailed expositions on the topic).
  However, we need the following wavelet characterisation for the non-homogeneous $\bmo$-Sobolev spaces.
  To our knowledge, such a characterisation using smooth wavelets appears first as a particular case of
  a theorem of M.~Frazier and B.~Jawerth in \cite{ref:FrazierJawerthDiscreteTransform}.
  \begin{thm}
    \label{thm:JbmoWavelet}
    Let $s > 0,$
    and consider a wavelet basis $\{\varphi_Q\colon Q \in \dyadic_0\} \cup \{\psi_\omega\colon \omega\in\mathcal{Q}\}$
    of regularity $r > s.$
    The wavelet series
    \begin{equation*}
      f(x) = \sum_{Q \in \dyadic_0} d_Q(f) \varphi_Q(x)
      + \sum_{\omega \in \mathcal{Q}} c_\omega(f) \psi_\omega(x), \quad x\in\reals^n,
    \end{equation*}
    represents an element in $\Jbmo{s}(\reals^n)$ if and only if
    \begin{equation}
    \label{eq:JBMOrWavelet}
        \sup_{Q \in \dyadic_0} |d_Q(f)|
        + \sup_{Q \in \dyadic} 
        \left(\frac{1}{|Q|} \sum_{\omega \in \mathcal{Q}(Q)} 4^{|\omega|s}|c_\omega(f)|^2\right)^{1/2} < \infty.
    \end{equation}
    Moreover, the above quantity is comparable to $\norm{f}{\Jbmo{s}}.$
  \end{thm}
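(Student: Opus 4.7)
The strategy is to reduce Theorem \ref{thm:JbmoWavelet} to the corresponding wavelet characterisation of $\bmo$ itself. By definition $\norm{f}{\Jbmo{s}} = \norm{g}{\bmo}$ where $g := \opbessel{-s}f$, and the Lemarié--Meyer characterisation of $\BMO$, supplemented by the observation that $\sup_{Q \in \dyadic_0}|d_Q(g)|$ controls the uniform local $\Lp{2}$-tail separating $\bmo$ from $\BMO$, yields
\begin{equation*}
  \norm{g}{\bmo}
  \simeq \sup_{Q \in \dyadic_0}|d_Q(g)|
  + \sup_{Q \in \dyadic}\left(\frac{1}{|Q|}\sum_{\omega \in \mathcal{Q}(Q)}|c_\omega(g)|^2\right)^{1/2}.
\end{equation*}
Hence the task reduces to transferring this wavelet characterisation through the Bessel lift $\opbessel{-s}$ from $g$ to $f$.

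Since wavelets are not eigenfunctions of $\opbessel{-s}$, the transfer is not exact; the key is that $\opbessel{-s}$ is almost diagonal in the wavelet basis. Heuristically, $\psi_\omega$ is frequency-localised near $|\xi| \sim 2^{|\omega|}$, so $\opbessel{-s}\psi_\omega \approx 2^{|\omega|s}\psi_\omega$, which explains the weight $4^{|\omega|s}$ in \eqref{eq:JBMOrWavelet}. I would make this quantitative by proving that the matrix
\begin{equation*}
  A_{\omega,\omega'} \coloneqq \langle \opbessel{-s}\psi_{\omega'},\psi_\omega\rangle,
\end{equation*}
together with the mixed blocks involving the $\varphi_Q$, admits almost-diagonal estimates with off-diagonal decay both in $|\,|\omega|-|\omega'|\,|$ and in the renormalised distance between supports. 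These bounds follow from the compact support, $\continuous^r$-smoothness and vanishing moments of the wavelets (using $r > s$) combined with the pseudodifferential structure of $\opbessel{-s}$, whose symbol has order $s$ and whose Schwartz kernel is smooth and rapidly decreasing away from the diagonal.

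Armed with these estimates, the sequence-space map $\{c_\omega(f)\} \mapsto \{c_\omega(g)\}$ extends to a bounded bijection from the weighted discrete $\bmo$-type sequence space with weight $4^{|\omega|s}$ -- that is, precisely the sequence space on the right of \eqref{eq:JBMOrWavelet} -- onto the unweighted discrete $\bmo$-type sequence space appearing on the right of the characterisation of $\bmo$. This is the Carleson-sequence analogue of the standard boundedness of almost-diagonal matrices in Frazier--Jawerth theory, from which Theorem \ref{thm:JbmoWavelet} indeed follows as a particular case. The analogous argument for the $d_Q$-block yields $\sup_{Q \in \dyadic_0}|d_Q(f)| \simeq \sup_{Q \in \dyadic_0}|d_Q(g)|$ modulo controllable contributions from the $c_\omega$-part, completing the transfer.

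The main technical obstacle is the verification of the almost-diagonal bounds with sufficient uniformity to handle the coupling between the inhomogeneous block indexed by $\dyadic_0$ and the homogeneous block indexed by $\mathcal{Q}$: one must control how the low-frequency term $\opbessel{-s}\varphi_Q$ contributes to the coefficients $c_\omega(g)$ at high scales, and conversely how $\opbessel{-s}\psi_\omega$ leaks into the scale-one coefficients $d_Q(g)$. Once these estimates are in place, the norm equivalence \eqref{eq:JBMOrWavelet} is immediate.
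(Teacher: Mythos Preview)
Your outline is a correct route to the result, but note that the paper does not actually carry out a proof: its argument consists of identifying $\Jbmo{s}$ with the appropriate inhomogeneous Triebel--Lizorkin space (via $\bmo$ being the $s=0$ case together with the lift property of the Bessel potential) and then citing Triebel's monograph for the wavelet characterisation of that scale, after mentioning Frazier--Jawerth as the original source. What you have sketched --- almost-diagonal estimates for $\opbessel{-s}$ in the wavelet basis and boundedness of almost-diagonal matrices on the associated Carleson-type sequence space --- is essentially the content of the Frazier--Jawerth machinery that those references contain, so you are reconstructing the proof the paper merely invokes. One minor simplification over your plan: rather than first characterising $\bmo$ by wavelets and then transferring through $\opbessel{-s}$, the cited references establish the wavelet characterisation of the whole Triebel--Lizorkin scale directly for all $s$ at once, which sidesteps a separate treatment of the Bessel potential and of the coupling between the $\varphi_Q$-block and the $\psi_\omega$-block that you flag as the main obstacle.
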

  
  It turns out that the characterisation of the distance to $\Jbmo{s}$
  can be done in a rather simple way with the above notation in terms of wavelet coefficients.
  In particular, we make use of the unconditional convergence of the wavelet series
  appearing in Theorems~\ref{thm:LipWavelet} and~\ref{thm:JbmoWavelet},
  both in the spaces $\lip{s}(\reals^n)$ and in the spaces $\Jbmo{s}(\reals^n)$
  (of course, this convergence must be understood not in norm, but in the sense of distributions).
  Let $0 < s \leq 1,$ $\varepsilon > 0$ and fix a wavelet basis
  $\{\varphi_Q\colon Q \in \dyadic_0\} \cup \{\psi_\omega\colon \omega\in\mathcal{Q}\}.$
  Given a function $f \in \lip{s}(\reals^n)$ with wavelet coefficients $\{c_\omega(f)\},$
  we consider the set
  \begin{equation*}
    W(s,f,\varepsilon) \coloneqq
    \bigcup_{j \in \naturals_0} \{Q \in \dyadic_j\colon \sup_l |c_{(l,Q)}(f)| > \varepsilon 2^{-j(n/2+s)}\}
  \end{equation*}
  and the associated set $T(s,f,\varepsilon) \subseteq \halfspace{n}$ defined by
  \begin{equation*}
    T(s,f,\varepsilon) \coloneqq \bigcup_{Q \in W(s,f,\varepsilon)} T(Q),
  \end{equation*}
  where for a given cube $Q \in \dyadic$ we denote
  $T(Q) = \{(x,y) \in \halfspace{n}\colon x \in Q, l(Q)/2 \leq y \leq l(Q)\}.$
  Observe that $T(s,f,\varepsilon)$ comprises those top half-cubes in $\halfspace{n}$
  corresponding to cubes in $\dyadic$ having at least one associated wavelet coefficient $c_\omega(f)$ large
  with respect to its scale.
  Moreover, we emphasise that the sets $W(s,f,\varepsilon)$ and $T(s,f,\varepsilon)$ do not take into account
  the wavelet coefficients $\{d_Q(f)\}$ corresponding to function $\varphi.$
  \begin{thm}
    \label{thm:DistanceJbmoWavelets}
    Let $0 < s \leq 1$ and $\varepsilon > 0.$
    Consider a wavelet basis $\{\varphi_Q\colon Q \in \dyadic_0\} \cup \{\psi_\omega\colon \omega\in\mathcal{Q}\}$
    of regularity $r > s,$
    a function $f \in \lip{s}(\reals^n)$ and the corresponding set $T(s,f,\varepsilon)$ defined
    in terms of this wavelet basis.
    Then, we have that
    \begin{equation}
      \label{eq:DistanceJbmoWavelets}
      \dist_s(f,\Jbmo{s}) \simeq \inf \{\varepsilon>0\colon M(T(s,f,\varepsilon)) < \infty\}.
    \end{equation}
  \end{thm}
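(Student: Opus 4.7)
The plan is to exploit the wavelet characterisations of both $\lip{s}$ (Theorem \ref{thm:LipWavelet}) and $\Jbmo{s}$ (Theorem \ref{thm:JbmoWavelet}), together with the unconditionality of the wavelet basis on each scale, so as to reduce the statement to a discrete estimate on wavelet coefficients.  Given a threshold $\varepsilon > 0,$ I will split the wavelet expansion of $f$ according to whether $|c_\omega(f)|$ exceeds $\varepsilon 2^{-|\omega|(n/2+s)}$ or not: the ``small'' part will be a $\lip{s}$-remainder of size $\lesssim \varepsilon,$ while the ``large'' part is the candidate element of $\Jbmo{s}.$  A crucial bookkeeping observation is the identity
\begin{equation*}
  M(T(s,f,\varepsilon)) \;=\; \ln 2 \cdot \sup_{Q_0 \in \dyadic}
  \frac{1}{|Q_0|} \sum_{Q \in W(s,f,\varepsilon),\, Q \subseteq Q_0} |Q|,
\end{equation*}
which follows from $\int_{l(Q)/2}^{l(Q)} dy/y = \ln 2$ and the disjointness of the top-halves $T(Q)$ across dyadic scales.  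This identity is what translates the continuous Carleson quantity $M$ into a purely discrete sum indexed by dyadic cubes of $W(s,f,\varepsilon).$

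For the inequality $\dist_s(f,\Jbmo{s}) \lesssim \inf\{\varepsilon\colon M(T(s,f,\varepsilon)) < \infty\},$ I fix $\varepsilon$ with $M(T(s,f,\varepsilon)) < \infty$ and set
\begin{equation*}
  g \;\coloneqq\; \sum_{Q \in \dyadic_0} d_Q(f) \varphi_Q
  + \sum_{\omega\colon |c_\omega(f)| > \varepsilon 2^{-|\omega|(n/2+s)}} c_\omega(f) \psi_\omega,
  \qquad h \coloneqq f - g.
\end{equation*}
By construction and Theorem \ref{thm:LipWavelet}, $\norm{h}{\lip{s}} \lesssim \varepsilon.$  To verify $g \in \Jbmo{s}$ through Theorem \ref{thm:JbmoWavelet}, I use the trivial bound $|c_\omega(f)| \lesssim \norm{f}{\lip{s}} 2^{-|\omega|(n/2+s)},$ which rewrites as $4^{|\omega|s}|c_\omega(f)|^2 \lesssim \norm{f}{\lip{s}}^2 |Q_\omega|.$  Summing the retained coefficients over $\omega \in \mathcal{Q}(Q_0)$ bounds the result by $\norm{f}{\lip{s}}^2 \sum_{Q \in W(s,f,\varepsilon),\, Q \subseteq Q_0} |Q|,$ and the displayed identity gives $\norm{f}{\lip{s}}^2 M(T(s,f,\varepsilon)) < \infty$ after dividing by $|Q_0|.$  The $d_Q(f)$-part is trivially admissible since $\sup_Q |d_Q(f)| \lesssim \norm{f}{\lip{s}}.$  Hence $g \in \Jbmo{s}$ and $\dist_s(f,\Jbmo{s}) \leq \norm{h}{\lip{s}} \lesssim \varepsilon.$

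For the converse, I fix an approximant $g \in \Jbmo{s}$ and set $\eta \coloneqq \norm{f-g}{\lip{s}}.$  Theorem \ref{thm:LipWavelet} gives $|c_\omega(f-g)| \leq C\eta 2^{-|\omega|(n/2+s)}$ for all $\omega.$  Thus for any $\varepsilon > C\eta$ and any $Q \in W(s,f,\varepsilon),$ the triangle inequality forces at least one $l$ with $\omega = (l,Q)$ satisfying $|c_\omega(g)| \geq (\varepsilon - C\eta) 2^{-|\omega|(n/2+s)},$ whence $4^{|\omega|s}|c_\omega(g)|^2 \geq (\varepsilon - C\eta)^2 |Q|.$  Applying Theorem \ref{thm:JbmoWavelet} over $Q_0 \in \dyadic$ yields
\begin{equation*}
  (\varepsilon - C\eta)^2 \,\frac{1}{|Q_0|}
  \sum_{Q \in W(s,f,\varepsilon),\, Q \subseteq Q_0} |Q|
  \;\leq\; \frac{1}{|Q_0|}\sum_{\omega \in \mathcal{Q}(Q_0)} 4^{|\omega|s}|c_\omega(g)|^2
  \;\lesssim\; \norm{g}{\Jbmo{s}}^2.
\end{equation*}
The key identity then gives $M(T(s,f,\varepsilon)) \lesssim \norm{g}{\Jbmo{s}}^2/(\varepsilon - C\eta)^2 < \infty$ whenever $\varepsilon > C\eta,$ so $\inf\{\varepsilon\colon M(T(s,f,\varepsilon)) < \infty\} \leq C\eta,$ and taking the infimum over admissible $g$ completes the estimate.

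The argument is almost purely combinatorial once both wavelet characterisations are in hand; the only real subtlety is the $\ln 2$ identity that cleanly intertwines $M(T(s,f,\varepsilon))$ with the discrete sum $\sum_{Q \in W,\, Q \subseteq Q_0} |Q|.$  I do not anticipate any genuine analytic difficulty: the whole proof is a transfer of \eqref{eq:DistanceJbmoWavelets} to the wavelet-coefficient side and back, enabled by the unconditionality of the basis in $\lip{s}$ and $\Jbmo{s}.$
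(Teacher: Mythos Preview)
Your proposal is correct and follows essentially the same route as the paper's own proof: both directions are obtained by thresholding the wavelet coefficients at level $\varepsilon 2^{-|\omega|(n/2+s)}$ and invoking Theorems \ref{thm:LipWavelet} and \ref{thm:JbmoWavelet}, with the identity $M(T(s,f,\varepsilon)) \simeq \sup_{Q_0}\frac{1}{|Q_0|}\sum_{Q \in W,\,Q\subseteq Q_0}|Q|$ serving as the bridge. The only cosmetic difference is that the paper, when building the approximant $g$, retains \emph{all} coefficients $c_{(l,P)}(f)$ for $P \in W(s,f,\varepsilon)$ (not just the individually large ones), but this changes nothing in the estimates.
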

  \noindent Note that the infimum in~\eqref{eq:DistanceJbmoWavelets} is taken over a non empty set,
  since for any function $f \in \lip{s}(\reals^n)$ with wavelet coefficients $\{d_Q(f)\}$ and $\{c_\omega(f)\}$ one has that
  $M(T(s,f,\norm{\{d(f),c(f)\}}{s})) = 0,$ where
  \begin{equation*}
    \norm{\{d(f),c(f)\}}{s} \coloneq \sup_{Q \in \dyadic_0} |d_Q(f)|
    + \sup_{\omega \in \mathcal{Q}} 2^{|\omega|(n/2+s)}|c_\omega(f)|.
  \end{equation*}
  Of course, here the values of the coefficients $\{d_Q(f)\}$ do not play any role as long as they are uniformly bounded.
  This is because in this situation the series $\sum_{Q\in\dyadic_0} d_Q(f) \varphi_Q \in \Jbmo{s}$ for $s < r.$
  Therefore, to estimate $\dist_s(f,\Jbmo{s})$ we could assume that $d_Q(f) = 0$ for all $Q \in \dyadic_0$
  without loss of generality.
  Observe as well that the set $T(s,f,\varepsilon)$ might depend on the wavelet basis we choose.
  Nonetheless, this will have no consequence for our results since we will focus on the comparability
  between the infima in equations~\eqref{eq:DistanceJbmoDifferences} and~\eqref{eq:DistanceJbmoWavelets},
  and to that end we will consider a fixed wavelet basis of enough regularity.
  
  Another way to characterise when a continuous bounded function $f$ belongs to $\lip{s}(\reals^n),$
  for $0 < s \leq 1,$ is by means of the hyperbolic derivatives of its Poisson extension to the
  upper half-space.
  Namely, let us denote by $P_y(x)$ the Poisson kernel on the upper half-space $\halfspace{n},$ and by $u$
  the harmonic extension of $f,$ that is $u(x,y) = \poisson{f}(x,y) = (P_y \ast f)(x).$
  Given $0 < s \leq 1,$ a continuous function $f$ is in $\lip{s}(\reals^n)$ if and only if
  \begin{equation}
    \label{eq:LipSpacesHyperbolicDerivative}
    \norm{f}{\Lp{\infty}}
    + \sup_{(x,y)\in\halfspace{n}} y^{2-s} \left|\frac{\partial^2 u}{\partial y^2}(x,y)\right| < \infty.
  \end{equation}
  Moreover, the above quantity is comparable to $\norm{f}{\lip{s}}.$
  For a detailed exposition on the topic, see \cite[pp.~141--149]{ref:Stein}.
  This motivates us to estimate the distance of a given function $f \in \lip{s}(\reals^n)$
  to the subspace $\Jbmo{s}(\reals^n)$ in terms of these hyperbolic derivatives.
  Consider the set
  \begin{equation*}
    D(s,f,\varepsilon) = \left\{(x,y) \in \halfspace{n}\colon
    y^2\left|\frac{\partial^2 \poisson{f}}{\partial y^2}(x,y)\right| > \varepsilon y^s\right\},
  \end{equation*}
  that is the set of points in the upper half-space for which the second hyperbolic
  derivative of $f$ is large with respect to the corresponding scale.
  We have the following result.
  \begin{thm}
    \label{thm:DistanceJbmoDerivatives}
    Let $0 < s \leq 1,$ and consider a function $f \in \lip{s}(\reals^n).$
    Then,
    \begin{equation}
      \label{eq:DistanceJbmoDerivatives}
      \dist_s (f,\Jbmo{s}) \simeq \inf \{\varepsilon > 0\colon M(D(s,f,\varepsilon)) < \infty\}.
    \end{equation}
  \end{thm}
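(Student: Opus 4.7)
The plan is to reduce Theorem \ref{thm:DistanceJbmoDerivatives} to Theorem \ref{thm:DistanceJbmoDifferences} (or, equivalently, to Theorem \ref{thm:DistanceJbmoWavelets}) by proving that the Carleson-type conditions attached to $D(s,f,\varepsilon)$ and to $S(s,f,\varepsilon)$ are equivalent up to comparable choices of $\varepsilon$. The central tool is the standard integral representation
\begin{equation*}
y^2 \partial_y^2 \poisson{f}(x,y) = \tfrac{1}{2}\int_{\reals^n} \Psi(z)\bigl[f(x+yz) - 2f(x) + f(x-yz)\bigr]\,dz,
\end{equation*}
where $\Psi$ is a radial Schwartz function with $\int_{\reals^n}\Psi = 0$, obtained as a scale-invariant rewriting of $y^2 \partial_y^2 P_y$. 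This identity expresses $y^2|\partial_y^2 \poisson{f}(x,y)|$ as a weighted average of second differences of $f$ at scales comparable to $y$ around the point $x$.

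For the upper estimate $\inf\{\varepsilon : M(D(s,f,\varepsilon)) < \infty\} \lesssim \dist_s(f,\Jbmo{s})$, I would select $g \in \Jbmo{s}$ with $\norm{f - g}{\lip{s}}$ essentially attaining the distance, and decompose $\partial_y^2 \poisson{f} = \partial_y^2 \poisson{f-g} + \partial_y^2 \poisson{g}$. The first summand is pointwise bounded by $C\norm{f-g}{\lip{s}}y^{s-2}$ thanks to \eqref{eq:LipSpacesHyperbolicDerivative}. For the second, I would appeal to the Carleson-measure characterization of $\Jbmo{s} = \triebel{s}{2}{\infty}$ (in the spirit of \cite{ref:TriebelFunctionSpaces-IV}), which asserts that $y^{1-2s}|\partial_y^2 \poisson{g}(x,y)|^2\,dx\,dy$ is a Carleson measure of mass controlled by $\norm{g}{\Jbmo{s}}^2$. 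A Chebyshev-type inequality on each tent $T(Q)$, using that $\chi_{D(s,g,\varepsilon)}(x,y)/y \leq \varepsilon^{-2}\,y^{3-2s}|\partial_y^2 \poisson{g}(x,y)|^2$ together with $y \leq 1$ on $T(Q)$, then yields $M(D(s,g,\varepsilon)) \lesssim \varepsilon^{-2}\norm{g}{\Jbmo{s}}^2$. Combining with the pointwise bound for $f - g$ gives $M(D(s,f,C\varepsilon)) < \infty$ for $\varepsilon \simeq \dist_s(f,\Jbmo{s})$.

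For the reverse direction I would route through Theorem \ref{thm:DistanceJbmoWavelets} and establish the comparability of $M(T(s,f,\varepsilon'))$ and $M(D(s,f,\varepsilon''))$ for related constants. The key link is a two-sided comparison: for a dyadic cube $Q$, the rescaled coefficient $2^{\tau(Q)(n/2+s)}\max_l |c_{(l,Q)}(f)|$ is comparable, up to almost-diagonal cross-terms, to $\sup\{y^{2-s}|\partial_y^2 \poisson{f}(x,y)| : (x,y) \in T(Q)\}$. The upper inequality follows by expanding $\partial_y^2 \poisson{f}$ into the wavelet basis and exploiting the fact that the family $(y^{2-s}\partial_y^2 \poisson{\psi_{\omega'}})_{\omega'}$ is almost-diagonal with respect to the tent geometry (only wavelets at comparable scale and nearby location contribute appreciably inside $T(Q)$), while the lower estimate comes from testing the integral representation above against a smooth molecule adapted to $\psi_{(l,Q)}$. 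The main obstacle I foresee is precisely this almost-diagonal analysis: since $\Psi$ is not compactly supported, the tail contributions from far-away wavelets must be controlled via a Schur-type bound, with any residual term absorbed into the overall $\lip{s}$-norm of $f$ that plays the role of a harmless background.
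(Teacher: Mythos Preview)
Your plan is in the right spirit but diverges from the paper in both directions, and the lower-bound half has a real gap.

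For the upper bound $\inf\{\varepsilon: M(D(s,f,\varepsilon))<\infty\}\lesssim\dist_s(f,\Jbmo{s})$, your argument via the splitting $f=(f-g)+g$, the pointwise Hölder bound on $\partial_y^2\poisson{f-g}$, and the Carleson characterisation of $\Jbmo{s}=\triebel{s}{\infty}{2}$ through $y^{1-2s}|\partial_y^2\poisson{g}|^2\,dx\,dy$ is correct and in fact more direct than the paper. The paper does not argue this way at all: it establishes a three-way cycle $\tau_0\lesssim\sigma_0\lesssim\delta_0\lesssim\tau_0$ between the wavelet, second-difference, and derivative infima, and then invokes Theorem~\ref{thm:DistanceJbmoWavelets}. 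So your upper bound buys a shortcut, at the cost of importing a Poisson-based Carleson characterisation of $\triebel{s}{\infty}{2}$ that the paper never states.

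For the lower bound, your plan is too vague and the mechanism you describe does not quite work. You propose a two-sided comparison of $2^{\tau(Q)(n/2+s)}\max_l|c_{(l,Q)}(f)|$ with $\sup_{T(Q)} y^{2-s}|\partial_y^2\poisson{f}|$ ``up to almost-diagonal cross-terms'', and then a Schur-type bound. But a pointwise/sup comparison over $T(Q)$ cannot hold: a large wavelet coefficient at $Q$ forces largeness of the Poisson derivative only somewhere in a \emph{hyperbolic neighbourhood} of $T(Q)$, not in $T(Q)$ itself, and conversely. This is exactly the obstruction the paper handles, and its solution is geometric rather than operator-theoretic: it proves set inclusions of the form $D(\varepsilon)\subseteq T(c\varepsilon)_R$ (Lemma~\ref{lemma:WaveletBoundsDerivatives}) for a suitable hyperbolic radius $R=R(f,\varepsilon)$, and then shows via a covering/``plumpness'' argument (Lemma~\ref{lemma:Dilations}, together with Lemmas~\ref{lemma:BadWaveletCubesDilations}--\ref{lemma:BadDerivativesDilation}) that $M(A)<\infty$ implies $M(A_R)<\infty$ for the relevant sets. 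This dilation step is the missing idea in your plan; without it, an almost-diagonal/Schur argument controls sums of coefficients but not the Carleson measure of the level sets $T(\varepsilon')$, which is what you actually need. Also note that your ``lower estimate by testing the integral representation against a molecule'' recovers wavelet coefficients from \emph{second differences}, not from $\partial_y^2\poisson{f}$, so it feeds into the $S$--$T$ comparison rather than the $D$--$T$ one you are after.
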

  
  Our proof of  Theorems~\ref{thm:DistanceJbmoDifferences} and~\ref{thm:DistanceJbmoDerivatives}
  will be via Theorem~\ref{thm:DistanceJbmoWavelets}.
  However, the reduction is rather non-trivial and will be based on careful comparison of
  the sets $S(s,f,\varepsilon),$ $D(s,f,\varepsilon)$ and $T(s,f,\varepsilon).$
  Our aim is to show that there are inequalities of the type  $M(T(s,f,\varepsilon)) \lesssim M(S(s,f,c\varepsilon)),$
  and similar inequalities between the other pairs, for an absolute constant $c > 0.$
  The proofs of these inequalities employ suitable inclusions between hyperbolically dilated sets.
  The inequalities then easily yield Theorems~\ref{thm:DistanceJbmoDifferences} and~\ref{thm:DistanceJbmoDerivatives}.
  
  The rest of the paper is structured as follows.
  Section~\ref{sec:WaveletCharacterisations} reduces Theorem~\ref{thm:JbmoWavelet} to known results in the literature
  and gives the proof of Theorem~\ref{thm:DistanceJbmoWavelets}.
  In Section~\ref{sec:PropertiesOfTheSets}, we first study the variability of the second differences,
  the wavelet coefficients and the hyperbolic derivative with respect to the location in the upper half-space.
  This is measured in terms of the hyperbolic distance.
  Notions related to the hyperbolic metric will be recalled in the beginning of that section.
  Finally, in Section~\ref{sec:Equivalences}, we are able to make a rigorous comparison of the sets $S,$ $D$ and $T.$
  The remaining details of Theorems~\ref{thm:DistanceJbmoDifferences} and~\ref{thm:DistanceJbmoDerivatives}
  are then given at the end of Section~\ref{sec:Equivalences}.
  
  \bigskip
  
  \noindent \textbf{Acknowledgements. } We would like to thank Óscar Domínguez, Eugenio Hernández, Oleg Ivrii and Artur Nicolau
  for helpful comments and conversations on the topic.
  We are also indebted to the referees for their valuable comments which have substantially improved
  the presentation of the paper.
  
  \bigskip
  
  \noindent \textbf{Notation. } We denote $\naturals_0 = \naturals \cup \{0\}.$
  For a measurable set $A \subset \reals^n,$ we denote by $|A|$ its Lebesgue measure,
  and we denote by $\chi_A$ its indicator function.
  In the particular case that the set $A$ is a cube, we denote by $l(A)$ its side-length.
  We use the standard notation $a \lesssim b$ (respectively $a \gtrsim b$)
  if there exists an absolute constant $C > 0$ such that $a \leq Cb$ (resp. $a \geq Cb$).
  We also denote $a \simeq b$ if $a \lesssim b$ and $a \gtrsim b.$
  
  If $\alpha = (\alpha_1,\ldots,\alpha_n) \in \naturals_0^n,$
  we say that $\alpha$ is a multi-index of length $|\alpha| = \alpha_1 + \ldots + \alpha_n,$
  and we use the notation $\partial^\alpha = (\partial/\partial x_1)^{\alpha_1} \dots (\partial/\partial x_n)^{\alpha_n}.$
  For a multi-index $\alpha$ and $x = (x_1, \ldots, x_n) \in \reals^n,$
  we denote $x^\alpha = x_1^{\alpha_1} \dots x_n^{\alpha_n}.$
  
  \section{Wavelet characterisation for the \texorpdfstring{$\BMO$}{BMO}-Sobolev spaces}
  \label{sec:WaveletCharacterisations}
  We begin by indicating how Theorem~\ref{thm:JbmoWavelet} reduces to known results in the literature.
  As mentioned in the Introduction, this characterisation in terms of smooth (not compactly supported) wavelets is a particular case
  of a result of Frazier and Jawerth (see \cite{ref:FrazierJawerthDiscreteTransform}).
  For a general characterisation, one can adapt the techniques of Lemarié and Meyer for the space $\BMO,$
  together with arguments from standard Sobolev spaces theory,
  in order to get the characterisation for the $\bmo$-Sobolev spaces
  (see \cite{ref:LemarieMeyer} and \cite{ref:MeyerWaveletsOperators}).
  Nontheless, for a more self-contained proof, we refer the reader to \cite{ref:TriebelFunctionSpaces-IV}.
  Observe that, due to the John-Niremberg inequality, our definition~\eqref{eq:bmoNorm}
  of the non-homogeneous $\bmo$ norm is equivalent to that appearing in \cite[p.~3]{ref:TriebelFunctionSpaces-IV}.
  Moreover, note as well the coincidence $\Jbmo{s} = \triebel{s}{2}{\infty}$ between the $\bmo$-Sobolev
  and the non-homogeneous Triebel-Lizorkin scales (see Proposition~1.3 and Theorem~1.22 in
  \cite[pp.~4,16]{ref:TriebelFunctionSpaces-IV}).
  Then, the statement of Theorem \ref{thm:JbmoWavelet} is given by Proposition~1.11 (see also Corollary~1.21)
  in \cite[pp.~9,16]{ref:TriebelFunctionSpaces-IV}.
  
  Next, we proof Theorem~\ref{thm:DistanceJbmoWavelets} using the characterisation
  in Theorem~\ref{thm:JbmoWavelet}.
  Recall that, for $s < r,$ the series $\sum_{Q\in\dyadic_0} d_Q \varphi_Q$ belongs to $\Jbmo{s},$
  where $r$ is the regularity of the chosen wavelet basis and the sequence $\{d_Q\}_{Q\in\dyadic_0}$
  is uniformly bounded.
  Hence, we only need to consider the coefficients corresponding to the wavelets $\psi_\omega$ for $\omega\in\mathcal{Q}.$
  Given $0 < s \leq 1$ and an integer $n \geq 1,$ consider the space of sequences
  $a = \{a_\omega\}_{\omega \in \mathcal{Q}}$ with norm
  \begin{equation}
    \label{eq:CoefficientSpaceNorm}
    \norm{a}{s} = \sup_{\omega \in \mathcal{Q}} 2^{|\omega|(n/2+s)} |a_\omega|.
  \end{equation}
  Observe that this is the same as the norm defined in formula~\eqref{eq:ZygmundWaveletCondition}
  of Theorem~\ref{thm:LipWavelet} when $d_Q(f) = 0$ for all $Q \in \dyadic_0.$
  Thus, this sequence space is the space of wavelet coefficients $\{c_\omega\}$ (corresponding to wavelets $\psi_\omega$)
  of functions in $\lip{s}(\reals^n).$
  Moreover, by Theorem~\ref{thm:LipWavelet} we have that $\norm{\{a_\omega\}}{s} \simeq \norm{f}{\lip{s}}$
  where
  \begin{equation*}
    f = \sum_{\omega\in\mathcal{Q}} a_\omega \psi_\omega.
  \end{equation*}
  In the rest of this section, we use $\norm{\cdot}{s}$ both to denote the norm on sequences defined by~\eqref{eq:CoefficientSpaceNorm}
  and the norm equivalent to $\norm{f}{\lip{s}}$ for functions $f \in \lip{s}(\reals^n)$ such that $d_Q(f) = 0$
  for all $Q \in \dyadic_0.$
  \begin{proof}[Proof of Theorem~\ref{thm:DistanceJbmoWavelets}]
    Denote by $\varepsilon_0$ the infimum in~\eqref{eq:DistanceJbmoWavelets}, which we assume to be positive,
    and assume that $\varepsilon < \varepsilon_0.$
    Consider a function $g \in \Jbmo{s}$ and assume that $\norm{f-g}{s} \leq \varepsilon$
    (so that $\norm{f-g}{\lip{s}} \lesssim \varepsilon$).
    Note that we may also assume that $d_Q(g) = d_Q(f)$ for all $Q \in \dyadic_0.$
    Pick $\varepsilon' \in (\varepsilon, \varepsilon_0)$ and note that, for $\omega\in\mathcal{Q},$
    whenever $|c_\omega(f)| > \varepsilon' 2^{-|\omega|(n/2+s)},$ we have that
    $|c_\omega(g)| > \delta 2^{-|\omega|(n/2+s)},$ where $\delta = \varepsilon' - \varepsilon > 0.$
    Thus, for any cube $Q \in \dyadic,$ we have that
    \begin{align*}
      \frac{1}{|Q|}  \sum_{\omega\in\mathcal{Q}(Q)} 4^{s|\omega|}|c_\omega(g)|^2 &\gtrsim
      \frac{\delta^2}{|Q|} \sum_{\substack{P \in W(s,f,\varepsilon')\\ P \subseteq Q}}
      2^{-n\tau(P)} = \frac{\delta^2}{|Q|} \sum_{\substack{P \in W(s,f,\varepsilon')\\ P \subseteq Q}} |P|\\
      &\simeq \frac{\delta^2}{|Q|} \int_Q \int_0^{l(Q)} \chi_{T(s,f,\varepsilon')}(x,y)\, \frac{dy\, dx}{y}.
    \end{align*}
    But the supremum, with $Q$ ranging over all dyadic cubes, of the latter quantity is not finite
    since $\varepsilon' < \varepsilon_0.$
    By Theorem~\ref{thm:JbmoWavelet}, this contradicts that $g \in \Jbmo{s}$ and, thus,
    $\dist_s(f,\Jbmo{s}) \gtrsim \varepsilon_0.$
    
    If $\varepsilon > \varepsilon_0,$ we construct a function $g \in \Jbmo{s}$ such that
    $\dist_s (f,g) \lesssim \varepsilon.$
    Given the wavelet coefficients $\{d_Q(f)\}$ and $\{c_\omega(f)\}$ of $f,$
    we set $d_Q(g) = d_Q(f)$ for every $Q \in \dyadic_0.$
    Clearly, since $f \in \lip{s}$ we have that
    \begin{equation}
        \label{eq:ScalingCoefficientsBound}
        \sup_{Q \in \dyadic_0} |d_Q(g)| < \infty
    \end{equation}
    by Theorem~\ref{thm:LipWavelet}.
    Next, take $c_\omega(g) = c_\omega(f)$ whenever $\omega = (l,P)$ with $P \in W(s,f,\varepsilon),$
    and $c_\omega(g) = 0$ otherwise.
    By construction, we have $\norm{f-g}{s} \leq \varepsilon.$
    Thus, by Theorem~\ref{thm:LipWavelet} we find that $\dist_s (f,g) \lesssim \varepsilon.$
    Furthermore, we have that
    \begin{align*}
      \frac{1}{|Q|}  \sum_{\omega\in\mathcal{Q}(Q)} 4^{|\omega|s}|c_\omega(g)|^2 &\lesssim
      \frac{\norm{c_\omega(f)}{s}^2}{|Q|} \sum_{\substack{P \in W(s,f,\varepsilon)\\ P \subseteq Q}} |P|\\
      &\simeq \frac{\norm{c_\omega(f)}{s}^2}{|Q|} \int_Q \int_0^{l(Q)} \chi_{T(s,f,\varepsilon)}(x,y)\, \frac{dy\, dx}{y}.
    \end{align*}
    Since $\varepsilon > \varepsilon_0,$
    the supremum of the latter quantity when $Q$ ranges over all dyadic cubes is finite.
    Hence, this and~\eqref{eq:ScalingCoefficientsBound} imply that $g \in \Jbmo{s}$ by Theorem~\ref{thm:JbmoWavelet}, as we wanted to show.
  \end{proof}
  
  \section{Properties of the sets \texorpdfstring{$S,$}{S,} \texorpdfstring{$D$}{D} and \texorpdfstring{$T$}{T}}
  \label{sec:PropertiesOfTheSets}
  In the present section, we first estimate local continuity properties of
  the various quantities defined in the upper half-space that we are using to
  quantify the Hölder norm.
  Later on, we use these estimates to control the change in the quantity $M(T(s,f,\varepsilon))$
  and its analogies when the corresponding set is hyperbolically enlarged.
  
  For that purpose,
  let us begin by recalling some basic facts concerning the hyperbolic metric in $\halfspace{n}.$
  The element $ds$ of hyperbolic arc length at $(x,y) \in \halfspace{n}$ is defined by
  \begin{equation*}
    ds^2 = \frac{dx^2 + dy^2}{y^2}.
  \end{equation*}
  Geodesics in this metric are circular arcs intersecting orthogonally the hyperplane $\{y = 0\}$ and vertical lines,
  that is straight lines intersecting orthogonally the same hyperplane.
  We denote by $\rho(a,b)$ the hyperbolic distance between $a,b \in \halfspace{n}$ given by this metric,
  that is the hyperbolic arc length of the geodesic segment joining $a$ and $b.$
  Given a set $A \in \halfspace{n}$ and $R > 0,$ we will consider the $R$-dilation of $A$ in the hyperbolic metric
  given by its hyperbolic $R$-neighbourhood and denote it by $\dilation{A}{R}.$ 
  In other words, we take
  \begin{equation*}
    \dilation{A}{R} = \{p \in \halfspace{n}\colon \rho(p,A) < R\}.
  \end{equation*}
  A hyperbolic ball of radius $r > 0$ and centre $z \in \halfspace{n}$ is denoted by $\hyperball{z}{r}.$
  
  We start by studying how $\Delta_2f(x,y)$ varies.
  \begin{lemma}
    \label{lemma:DifferencesComparisonHolder}
    Let $0 < s < 1.$
    Consider a function $f \in \lip{s}(\reals^n).$
    Then
    \begin{equation*}
      |\Delta_2f(x,y) - \Delta_2f(x',y')| \lesssim \norm{f}{\lip{s}} \left(|x-x'|^s + |y-y'|^s\right).
    \end{equation*}
  \end{lemma}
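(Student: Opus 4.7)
The plan is to bound $\Delta_2f(x,y)$ by $\Delta_2f(x',y')$ up to an error of the desired size, and then symmetrize. The key observation is that, for each displacement $h$ with $|h|=y$, the rescaled displacement $h'\coloneqq(y'/y)h$ has $|h'|=y'$ and satisfies $|h-h'|=|1-y'/y|\,y = |y-y'|$. Under the assumption $1/2<y/y'<2$, the three points $x+h$, $x$, $x-h$ pair naturally with $x'+h'$, $x'$, $x'-h'$, each pair at Euclidean distance at most $|x-x'|+|y-y'|$.

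The first step is to record a \emph{global} Hölder estimate: from the definition \eqref{eq:HolderCondition} one has $|f(a)-f(b)|\le \|f\|_{\lip{s}}|a-b|^s$ whenever $|a-b|<1$, while for $|a-b|\ge 1$ one uses $|f(a)-f(b)|\le 2\|f\|_{\Lp{\infty}}\le 2\|f\|_{\lip{s}}\le 2\|f\|_{\lip{s}}|a-b|^s$. So the Hölder inequality holds globally (up to an absolute constant).

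The second step is the pointwise comparison. Writing
\begin{align*}
  &\bigl(f(x+h)-2f(x)+f(x-h)\bigr)-\bigl(f(x'+h')-2f(x')+f(x'-h')\bigr)\\
  &\quad=\bigl(f(x+h)-f(x'+h')\bigr)-2\bigl(f(x)-f(x')\bigr)+\bigl(f(x-h)-f(x'-h')\bigr),
\end{align*}
each of the three differences is bounded in modulus by a constant times $\|f\|_{\lip{s}}(|x-x'|+|y-y'|)^s$ using the global Hölder estimate. Since $0<s<1$, the subadditivity $(a+b)^s\le a^s+b^s$ turns this into $\|f\|_{\lip{s}}(|x-x'|^s+|y-y'|^s)$. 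By the reverse triangle inequality applied to the absolute values in the definition of $\Delta_2 f$,
\begin{equation*}
  |f(x+h)-2f(x)+f(x-h)| \le \Delta_2 f(x',y') + C\,\|f\|_{\lip{s}}\bigl(|x-x'|^s+|y-y'|^s\bigr).
\end{equation*}

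The final step is to take the supremum over $|h|=y$ to conclude $\Delta_2f(x,y)\le \Delta_2f(x',y')+C\|f\|_{\lip{s}}(|x-x'|^s+|y-y'|^s)$, and then swap the roles of $(x,y)$ and $(x',y')$ (the map $h\mapsto (y/y')h$ is bijective between spheres of radius $y'$ and $y$ under the hypothesis $1/2<y/y'<2$) to obtain the matching inequality. The stated two-sided bound follows. The argument contains no real obstacle beyond the global extension of the Hölder estimate and the choice of pairing $h'=(y'/y)h$, which is exactly what keeps $|h-h'|$ of order $|y-y'|$.
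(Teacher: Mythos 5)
Your proof is correct and follows essentially the same route as the paper: the paper splits the comparison into two steps through the intermediate point $(x',y)$ (first shifting the base point at fixed scale, then rescaling the displacement via $q=(y'/y)p$), whereas you combine both into a single termwise Hölder estimate with $h'=(y'/y)h$, which is the same mechanism. Your explicit justification of the global Hölder bound via the $\Lp{\infty}$ term is a detail the paper leaves implicit, and is welcome.
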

  \begin{proof}
    Consider an arbitrary auxiliary $p \in \reals^n$ such that $|p| = y$ and note that
    \begin{equation*}
      |(f(x+p)-f(x'+p)) - 2(f(x)-f(x')) + (f(x-p)-f(x'-p))| \lesssim \norm{f}{\lip{s}} |x-x'|^s
    \end{equation*}
    because $f \in \lip{s}(\reals^n).$
    Since this holds uniformly for any such $p,$
    we may use the general fact that for a bounded function $H$ on $\reals^{2n}$ one has that
    \begin{equation}
      \label{eq:BoundedFunctionGeneralFact}
      \left|\sup_{|p|=y}|H(x,p)| - \sup_{|p|=y}|H(x',p)|\right| \leq \sup_{|p|=y}|H(x,p) - H(x',p)|
    \end{equation}
    to deduce that
    \begin{equation}
      \label{eq:EqualScaleDifferencesHolder}
      |\Delta_2f(x,y) - \Delta_2f(x',y)| \lesssim \norm{f}{\lip{s}} |x-x'|^s.
    \end{equation}
    On the other hand, if $q = (y'/y)p,$ we also have that
    \begin{equation*}
      |(f(x'+p)-f(x'+q)) + (f(x'-p)-f(x'-q))| \lesssim \norm{f}{\lip{s}} |p-q|^s = \norm{f}{\lip{s}} |y-y'|^s.
    \end{equation*}
    This is true uniformly for any such $p$ and, thus, it holds that
    \begin{equation}
      \label{eq:EqualCentreDifferencesHolder}
      |\Delta_2f(x',y) - \Delta_2f(x',y')| \lesssim \norm{f}{\lip{s}} |y-y'|^s.
    \end{equation}
    As we join~\eqref{eq:EqualScaleDifferencesHolder} and~\eqref{eq:EqualCentreDifferencesHolder}
    the conclusion follows immediately.
  \end{proof}
  
  We deal with the case $s = 1$ separately.
  
  \begin{lemma}
    \label{lemma:DifferencesComparisonZygmund}
    Consider a function $f \in \zyg(\reals^n) = \lip{1}(\reals^n).$
    If $|x-x'| < (y+y')/2,$ then
    \begin{equation*}
      \begin{split}
        |\Delta_2f(x,y) - \Delta_2f(x',y')| \lesssim \norm{f}{\zyg}
        \bigg(&|x-x'| \log\left(e+\frac{y+y'}{|x-x'|}\right)\\
        + &|y-y'| \log\left(e+\frac{y+y'}{|y-y'|}\right)\bigg).
      \end{split}
    \end{equation*}
  \end{lemma}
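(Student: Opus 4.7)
My plan is to follow the two-step decomposition used in the proof of Lemma~\ref{lemma:DifferencesComparisonHolder}, writing
\[
|\Delta_2 f(x,y) - \Delta_2 f(x',y')| \leq |\Delta_2 f(x,y) - \Delta_2 f(x',y)| + |\Delta_2 f(x',y) - \Delta_2 f(x',y')|
\]
and bounding each term separately with the help of \eqref{eq:BoundedFunctionGeneralFact}. The new obstacle compared to the Hölder case is that first differences of Zygmund functions are not Lipschitz but only log-Lipschitz, so the factor $\log(y/|h|)$ appearing in the statement has to be squeezed out of second-order cancellations rather than from a simple modulus of continuity. The key technical tool I would isolate is the following interpolation estimate: if $G$ is Zygmund on $\reals^n$ with $\norm{G}{\Lp{\infty}} \leq M$ and $\norm{G}{\zygsemi} \leq K$, then for $|h| < 1/2$,
\[
|G(x+h) - G(x)| \lesssim K\,|h|\, \log\bigl(e + M/(K|h|)\bigr).
\]
I would prove this by rewriting the Zygmund inequality as $\Delta_h G(z) = \tfrac{1}{2}\Delta_{2h} G(z) + O(K|h|)$ (with $\Delta_h G(z) := G(z+h) - G(z)$), iterating it to obtain $|\Delta_h G(x)| \leq 2^{-k}|\Delta_{2^k h} G(x)| + c k K |h|$, bounding $|\Delta_{2^k h} G(x)| \leq 2M$, and optimising in $k \simeq \log_2(M/(K|h|))$ subject to the constraint $2^k |h| \lesssim 1$ dictated by the non-homogeneous norm.

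For the change of centre, I fix $p$ with $|p| = y$ and set $G_p(z) := f(z+p) - 2f(z) + f(z-p)$. Expanding its Zygmund second difference as a sum of four Zygmund second differences of $f$ at the shifted centres $z+p$, $z$, $z-p$ yields $\norm{G_p}{\zygsemi} \lesssim \norm{f}{\zyg}$, while applying the Zygmund inequality to $f$ itself at centre $z$ with step $p$ gives $\norm{G_p}{\Lp{\infty}} \lesssim \norm{f}{\zyg}\cdot y$ when $y < 1$. Plugging these values of $M$ and $K$ into the interpolation estimate and then taking the supremum over $|p|=y$ by way of \eqref{eq:BoundedFunctionGeneralFact} delivers
\[
|\Delta_2 f(x,y) - \Delta_2 f(x',y)| \lesssim \norm{f}{\zyg}\, |x-x'|\, \log(e + y/|x-x'|),
\]
which is the first half of the claim.

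For the change of scale, I reduce to a one-dimensional problem by fixing a unit vector $e$ and considering $\phi_e(t) := f(x'+te) - 2f(x') + f(x'-te)$, so that $\phi_e(y) - \phi_e(y') = [f(x'+ye) - f(x'+y'e)] + [f(x'-ye) - f(x'-y'e)]$. Expanding the second difference of $\phi_e$ in $t$ as a sum of two Zygmund second differences of $f$ with step $he$ gives $\norm{\phi_e}{\zygsemi} \lesssim \norm{f}{\zyg}$, and the Zygmund inequality at $x'$ with step $te$ gives the pointwise bound $|\phi_e(t)| \leq \norm{f}{\zyg}\,|t|$ for $|t| < 1$, which controls $\phi_e$ by $C\norm{f}{\zyg}\, y$ on a neighbourhood of the interval containing both $y$ and $y'$. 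Applying the interpolation estimate with $|h| = |y-y'|$ and then passing to $\sup_e$ via \eqref{eq:BoundedFunctionGeneralFact} yields the remaining term $\norm{f}{\zyg}\,|y-y'|\,\log(e + y/|y-y'|)$. The main obstacle throughout is the interpolation estimate together with the bookkeeping forced by the non-homogeneous Zygmund condition: the dyadic iteration must be truncated at $k \simeq \log_2(1/|h|)$, and the regime $y \geq 1$, where the sup bounds on $G_p$ and $\phi_e$ degenerate, must be handled separately using the cruder bound $\norm{G}{\Lp{\infty}} \lesssim \norm{f}{\zyg}$ together with the observation that $\log(e + 1/|h|) \leq \log(e + y/|h|)$ when $y \geq 1$.
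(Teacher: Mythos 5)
Your proposal is correct, but it takes a genuinely different route from the paper. The paper proves this lemma by a Littlewood--Paley decomposition $f=\sum_j f_j$ with $\norm{f_j}{\Lp{\infty}}\lesssim 2^{-j}\norm{f}{\zyg}$ and the attendant derivative bounds, splitting the sum over frequencies into the three ranges $2^j<1/y,$ $1/y\leq 2^j<1/|h|$ and $2^j\geq 1/|h|$ (with $h=x-x'$ or $y-y'$); the middle range of roughly $\log(y/|h|)$ bands produces the logarithm. You instead stay entirely real-variable: you isolate the classical interpolation estimate ``$\norm{G}{\Lp{\infty}}\leq M$ and $\norm{G}{\zygsemi}\leq K$ imply $|G(x+h)-G(x)|\lesssim K|h|\log(e+M/(K|h|))$'' via the telescoping identity $\Delta_h G=\tfrac12\Delta_{2h}G+O(K|h|),$ and then apply it to the auxiliary functions $G_p$ and $\phi_e,$ whose Zygmund seminorms are $\lesssim\norm{f}{\zyg}$ (each second difference of $G_p$ or $\phi_e$ splits into second differences of $f$) and whose sup-norms are $\lesssim\norm{f}{\zyg}\,y$ on the relevant range; this makes the ratio $M/K\simeq y$ produce exactly the factor $\log(e+y/|h|).$ The bookkeeping you flag is the right one: the iteration must stop once $2^k|h|\simeq\min(y,1),$ and for $y\geq 1$ the cruder bound $M\lesssim\norm{f}{\zyg}$ together with $\log(e+1/|h|)\leq\log(e+y/|h|)$ closes the argument. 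Your approach is more elementary and self-contained (no Fourier analysis), and it makes transparent where the logarithm comes from; the paper's Littlewood--Paley argument is less modular here but reuses machinery that recurs elsewhere in the paper (e.g.\ in Lemma \ref{lemma:WaveletBoundsDerivatives}) and adapts uniformly to the whole scale $0<s\leq 1.$ Either proof yields the stated estimate.
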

  \begin{proof}
    Consider smooth symmetric functions $\widehat{\phi_0}$ and $\widehat{\phi_1}$
    with $\supp{\widehat{\phi_0}} \subseteq \{\xi \in \reals^n\colon |\xi| \leq 1\}$
    and $\supp{\widehat{\phi_1}} \subseteq \{\xi \in \reals^n\colon 1/2 \leq |\xi| \leq 2\},$
    and such that if $\widehat{\phi_j}(\xi) = \widehat{\phi_1}(2^{-(j-1)}\xi)$ for $j \in \naturals,$
    then $\sum_{j\in\naturals_0} \widehat{\phi_j}(\xi) = 1$ for every $\xi \in \reals^n.$
    Now, take $\phi_j = \invfourier{\widehat{\phi_j}}$ and $f_j = \phi_j \ast f$ for $j \in \naturals_0,$
    so that the Littlewood-Paley dyadic decomposition of $f$ can be writen as $f = \sum_{j\in\naturals_0} f_j.$
    It is a well known fact (see for instance \cite[p.~253]{ref:SteinHarmonicAnalysis})
    that, for a given $s > 0,$ $f \in \lip{s}$  if and only if $\norm{f_j}{\Lp{\infty}} \leq C 2^{-js}$ for all $j \geq 0,$ and
    \begin{equation*}
      \norm{f}{\lip{s}} \simeq \sup_{j \geq 0} 2^{js} \norm{f_j}{\Lp{\infty}}.
    \end{equation*}
    Moreover, we have that $\norm{\partial^\alpha f_j}{\Lp{\infty}} \leq \norm{f}{\lip{s}} 2^{j|\alpha|} 2^{-js}$
    for any multi-index $\alpha.$
    In particular, this applies to the Zygmund class taking $s = 1.$
    
    Take $p \in \reals^n$ with $|p| = y.$
    For this particular $p$ we have that
    \begin{multline*}
      |f(x+p) - 2f(x) + f(x-p) - f(x'+p) + 2f(x') - f(x'-p)|\\
      \leq \sum_{j\in\naturals_0} |f_j(x+p) - 2f_j(x) + f_j(x-p) - f_j(x'+p) + 2f_j(x') - f_j(x'-p)|.
    \end{multline*}
    We split this sum into those terms for which $2^j < 1/(y+y'),$ those with $1/(y+y') \leq 2^j < 1/|x-x'|$
    and those with $2^j \geq 1/|x-x'|.$
    For the first part, we express
    \begin{equation*}
      f_j(x+p) - 2f_j(x) + f_j(x-p) = \int_{-1}^{1} (1-|u|) \frac{d^2}{du^2} f_j(x+up)\, du.
    \end{equation*}
    Using the bound on the third derivatives of $f_j$ we obtain
    \begin{equation*}
      \left|\frac{d^2}{du^2}f_j(x+up) - \frac{d^2}{du^2}f_j(x'+up)\right| \lesssim \norm{f}{\zyg} |p|^2 2^{2j}|x-x'|.
    \end{equation*}
    Hence
    \begin{multline*}
      |f_j(x+p) - 2f_j(x) + f_j(x-p) - f_j(x'+p) + 2f_j(x') - f_j(x'-p)|\\
      \lesssim \norm{f}{\zyg} 2^{2j} |x-x'| y^2.
    \end{multline*}
    This yields
    \begin{equation*}
      \begin{split}
        \sum_{2^j < 1/(y+y')} |f_j(x+p) - 2f_j(x) + f_j(x-p) &- f_j(x'+p) + 2f_j(x') - f_j(x'-p)|\\
        &\lesssim \norm{f}{\zyg} |x-x'| y^2 \sum_{2^j < 1/(y+y')} 2^{2j}\\
        &\lesssim \norm{f}{\zyg} |x-x'|.
      \end{split}
    \end{equation*}
    When $1/(y+y') \leq 2^j < 1/|x-x'|,$ we use the $j$-independent uniform bound for
    the first derivative of $f_j$ to obtain directly
    \begin{multline*}
      |f_j(x+p) - 2f_j(x) + f_j(x-p) - f_j(x'+p) + 2f_j(x') - f_j(x'-p)|\\
      \lesssim \norm{f}{\zyg} |x-x'|.
    \end{multline*}
    Then we find that
    \begin{equation*}
      \begin{split}
        \sum_{1/(y+y') \leq 2^j < 1/|x-x'|} |f_j(x+p) - 2f_j(x) &+ f_j(x-p)\\
        &- f_j(x'+p) + 2f_j(x') - f_j(x'-p)|\\
        &\lesssim \norm{f}{\zyg} |x-x'| \sum_{1/(y+y') \leq 2^j < 1/|x-x'|} 1\\
        &\lesssim \norm{f}{\zyg} |x-x'| \log\left(e+\frac{y+y'}{|x-x'|}\right).
      \end{split}
    \end{equation*}
    Finally, using that $\norm{f_j}{\Lp{\infty}} \lesssim \norm{f}{\zyg} 2^{-j}$
    we get that the remaining terms are bounded by
    \begin{equation*}
      \begin{split}
        \sum_{2^j \geq 1/|x-x'|} |f_j(x+p) - 2f_j(x) &+ f_j(x-p)\\
        &- f_j(x'+p) + 2f_j(x') - f_j(x'-p)|\\
        &\lesssim \norm{f}{\zyg} \sum_{2^j \geq 1/|x-x'|} 2^{-j}\\
        &\lesssim \norm{f}{\zyg} |x-x'|.
      \end{split}
    \end{equation*}
    Since all the above bounds are uniform on $p$ with $|p| = y,$
    applying again observation~\eqref{eq:BoundedFunctionGeneralFact}, we obtain the estimate
    \begin{equation}
      \label{eq:EqualScaleDifferencesZygmund}
      |\Delta_2f(x,y) - \Delta_2f(x',y)| \lesssim \norm{f}{\zyg} |x-x'|
      \log\left(e+\frac{y+y'}{|x-x'|}\right).
    \end{equation}
    
    Now, let $p$ be as before and consider the case $x = x'$ but $y \neq y'.$
    We take $q = (y'/y)p,$ and note that it is enough to estimate the quantity
    \begin{multline*}
      |f(x'+p) - f(x'+q) + f(x'-p) - f(x'-q)|\\
      \leq \sum_{j\in\naturals_0} |f_j(x'+p) - f_j(x'+q) + f_j(x'-p) - f_j(x'-q)|.
    \end{multline*}
    We split the previous sum into those terms for which
    $2^j < 1/(y+y'),$ those with $1/(y+y') \leq 2^j < 1/|y-y'|$ and those with $2^j \geq 1/|y-y'|,$
    and follow the previous argument with minor changes.
    Towards estimating  the first sum, we observe first the elementary bound
    \begin{equation*}
      \begin{split}
        |g(1,1)-g(1,-1)+g(-1,-1)-g(-1,1)| &= \left|\int_{[-1,1]^2}g_{uv}(u,v)\, du\, dv\right|\\
        &\leq 4 \sup_{(u,v) \in [-1,1]^2}|g_{uv}(u,v)|
      \end{split}
    \end{equation*}
    As we apply this to $g(u,v) \coloneqq f_j\left(x+u\frac{p+q}{2}+v\frac{p-q}{2}\right),$
    together with the known bound $\norm{\partial^\alpha f_j}{\Lp{\infty}} \lesssim \norm{f}{\zyg} 2^j$
    for multi-indices of length $|\alpha| = 2,$ it follows that
    \begin{equation*}
      |f_j(x'+p) - f_j(x'+q) + f_j(x'-p) - f_j(x'-q)| 
      \lesssim \norm{f}{\zyg} 2^j |y-y'| (y+y'),
    \end{equation*}
    and so
    \begin{equation*}
      \sum_{2^j < 1/(y+y')} |f_j(x'+p) - f_j(x'+q) + f_j(x'-p) - f_j(x'-q)|
      \lesssim \norm{f}{\zyg} |y-y'|.
    \end{equation*}
    Then, for those terms with $1/(y+y') \leq 2^j < 1/|y-y'|,$ using the $j$-uniform Lipschitz property of $f_j$
    we may deduce
    \begin{equation*}
      |f_j(x'+p) - f_j(x'+q) + f_j(x'-p) - f_j(x'-q)| \lesssim \norm{f}{\zyg} |y-y'|,
    \end{equation*}
    which yields
    \begin{multline*}
      \sum_{1/(y+y') \leq 2^j < 1/|y-y'|} |f_j(x'+p) - f_j(x'+q) + f_j(x'-p) - f_j(x'-q)|\\
      \lesssim \norm{f}{\zyg} |y-y'| \log\left(e+\frac{y+y'}{|y-y'|}\right).
    \end{multline*}
    Finally, use the size estimate $\norm{f_j}{\Lp{\infty}} \lesssim \norm{f}{\zyg} 2^{-j}$ to get
    \begin{equation*}
      \sum_{2^j \geq 1/|y-y'|} |f_j(x'+p) - f_j(x'+q) + f_j(x'-p) - f_j(x'-q)|
      \lesssim \norm{f}{\zyg} |y-y'|.
    \end{equation*}
    These bounds are uniform for any $p$ and $q$ such that $|p| = y$ and $q = (y'/y)p$
    and, therefore, it is clear that
    \begin{equation}
      \label{eq:EqualCentreDifferencesZygmund}
      |\Delta_2f(x',y) - \Delta_2f(x',y')| \lesssim \norm{f}{\zyg} |y-y'|
      \log\left(e+\frac{y+y'}{|y-y'|}\right).
    \end{equation}
    The statement of the lemma follows from~\eqref{eq:EqualScaleDifferencesZygmund}
    and~\eqref{eq:EqualCentreDifferencesZygmund}.
  \end{proof}
  
  As a second step we study the variation of $y^2 (\partial^2 \poisson{f}/\partial y^2)(x,y).$
  
  \begin{lemma}
    \label{lemma:DerivativesComparison}
    Let $0 < s \leq 1$ and consider a function $f \in \lip{s}(\reals^n).$
    Denote by $u$ the harmonic extension $\poisson{f}$ of $f$ to $\halfspace{n}.$
    Then, for any $(x,y),(x',y') \in \halfspace{n},$ we have that
    \begin{equation*}
      \left|y^{2-s}\frac{\partial^2 u}{\partial y^2}(x,y)
      - {y'}^{2-s}\frac{\partial^2 u}{\partial y^2}(x',y')\right|
      \lesssim \norm{f}{\lip{s}} \rho\left((x,y),(x',y')\right).
    \end{equation*}
  \end{lemma}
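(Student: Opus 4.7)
Set $F(x,y) \coloneqq y^{2-s}\frac{\partial^2 u}{\partial y^2}(x,y)$. By \eqref{eq:LipSpacesHyperbolicDerivative}, $F$ is bounded by $\norm{f}{\lip{s}}$ on $\halfspace{n}$. The plan is to show that $F$ is hyperbolically Lipschitz, which boils down to the estimate $y\,|\nabla F(x,y)| \lesssim \norm{f}{\lip{s}}$, where $\nabla$ denotes the full Euclidean gradient in $(x,y)$. Once this is in place, for any rectifiable curve $\gamma$ joining $(x,y)$ and $(x',y')$ in $\halfspace{n}$, the fundamental theorem of calculus combined with $y\,|\nabla F|\lesssim\norm{f}{\lip{s}}$ yields
\begin{equation*}
  |F(x,y)-F(x',y')| \leq \int_\gamma |\nabla F|\,|d\gamma| \lesssim \norm{f}{\lip{s}} \int_\gamma \frac{|d\gamma|}{\gamma_{n+1}},
\end{equation*}
and taking the infimum over $\gamma$ produces exactly $\norm{f}{\lip{s}}\,\rho((x,y),(x',y'))$ on the right.

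The computation of $\nabla F$ gives
\begin{equation*}
  \partial_{x_i} F = y^{2-s} \partial_{x_i}\partial_y^2 u, \qquad
  \partial_y F = (2-s)\, y^{1-s}\partial_y^2 u + y^{2-s}\partial_y^3 u.
\end{equation*}
The first summand in $\partial_y F$, multiplied by $y$, is controlled by \eqref{eq:LipSpacesHyperbolicDerivative} directly. For the remaining terms the key ingredient is the standard companion to \eqref{eq:LipSpacesHyperbolicDerivative}: for $f \in \lip{s}(\reals^n)$ and any multi-index $\alpha$ and $k \in \naturals_0$ with $|\alpha|+k \geq 1$ and $|\alpha|+k > s$, the harmonic extension satisfies
\begin{equation*}
  y^{|\alpha|+k-s}\,|\partial_x^\alpha \partial_y^k u(x,y)| \lesssim \norm{f}{\lip{s}}.
\end{equation*}
This is the standard higher-order Hölder/Zygmund characterisation in terms of the Poisson extension (see \cite[pp.~141--149]{ref:Stein}). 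Applied with $(\alpha,k)$ equal to $(e_i,2)$ and $(0,3)$, it gives $y^{3-s}|\partial_{x_i}\partial_y^2 u|\lesssim\norm{f}{\lip{s}}$ and $y^{3-s}|\partial_y^3 u|\lesssim\norm{f}{\lip{s}}$. Combining these,
\begin{equation*}
  y\,|\nabla F(x,y)| \lesssim \norm{f}{\lip{s}},
\end{equation*}
which is what we need.

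The only delicate point is ensuring the higher-derivative estimate above holds uniformly down to $s=1$ (the Zygmund endpoint), but this is exactly why we only needed $|\alpha|+k \geq 2 > s$ for the terms appearing; no logarithmic loss as in Lemma \ref{lemma:DifferencesComparisonZygmund} occurs here because we differentiate $u$ twice or more. With the pointwise hyperbolic gradient bound in hand, the integration along a curve realising (up to a factor) the hyperbolic distance between $(x,y)$ and $(x',y')$ concludes the proof. The main (mild) obstacle is invoking the higher-order Poisson estimates cleanly; everything else is an elementary product-rule computation followed by a line integral.
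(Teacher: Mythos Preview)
Your proposal is correct and follows essentially the same approach as the paper: define $g(x,y)=y^{2-s}\partial_y^2 u$, bound its hyperbolic gradient $y|\nabla g|$ via the higher-order Poisson estimates from \cite[Chapter~V]{ref:Stein}, and then integrate along a geodesic to obtain the global Lipschitz bound in the hyperbolic metric. The paper's proof is slightly terser in phrasing the last step (``locally Lipschitz plus geodesic metric implies globally Lipschitz''), but the content is identical.
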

  \begin{proof}
    Recall that
    \begin{equation*}
      \left|y^{2-s}\frac{\partial^2 u}{\partial y^2}\right|
      \lesssim \norm{f}{\lip{s}}.
    \end{equation*}
    Moreover, this is equivalent to
    \begin{equation}
      \label{eq:HigherHyperDerivativesBound}
      \left|y^{l-s}\frac{\partial^l u}{\partial y^l}\right|
      \leq C_l \norm{f}{\lip{s}}
    \end{equation}
    for any integer $l > 2$ (see \cite[Chapter~V]{ref:Stein}), where $C_l$ only depends on $l.$
    Define the function
    \begin{equation*}
      g(x,y) = y^{2-s} \frac{\partial^2 u}{\partial y^2}(x,y)
    \end{equation*}
    and let us denote by $\mathrm{D}g$ the gradient of $g.$
    Then, the hyperbolic derivative of $g$ may be estimated in view of~\eqref{eq:HigherHyperDerivativesBound} as
    \begin{align*}
      |y \mathrm{D}g(x,y)| &\leq y \left(\left|\frac{\partial g}{\partial y}(x,y)\right|
                                        +\sum_{k=1}^n \left|\frac{\partial g}{\partial x_k}(x,y)\right|\right)\\
                           &\leq (2-s) y^{2-s} \left|\frac{\partial^2 u}{\partial y^2}(x,y)\right|
                                 + y^{3-s} \left|\frac{\partial^3 u}{\partial y^3}(x,y)\right|\\
                                 &\quad + y^{3-s} \sum_{k=1}^n \left|\frac{\partial^3 u}{\partial y^2 \partial x_k}(x,y)\right|\\
                           &\lesssim \norm{f}{\lip{s}}.
    \end{align*}
    Hence, $g$ is locally Lipschitz with respect to the hyperbolic metric,
    which also implies the global Lipschitz property as the hyperbolic metric is geodesic.
    This implies the claim.
  \end{proof}
  
  Recall from~\eqref{eq:CarlesonSetMeasure} that for a given measurable set $A \subseteq \halfspace{n}$ we defined the quantity
  \begin{equation}
    \label{eq:DefMQuantity}
    M(A) = \sup_{Q\in\dyadic} \frac{1}{|Q|} \int_Q\int_0^{l(Q)} \chi_{A}(x,y)\, \frac{dy\, dx}{y}.
  \end{equation}
  
  \begin{lemma}
    \label{lemma:Dilations}
    Assume that $A\subset\reals^{n+1}_+$ has the following measure density property:
    there are $\delta,\delta'\in (0,1/10)$ so that the hyperbolic $\delta$-neighbourhood
    of any point $z\in A$ satisfies
    \begin{equation*}
      |\hyperball{z}{\delta} \cap A|\geq \delta'|\hyperball{z}{\delta}|.
    \end{equation*}
    Then, if $M(A)<\infty$ we also have $M(\dilation{A}{R}) < \infty$  for any $R > 0.$
  \end{lemma}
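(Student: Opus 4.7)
The strategy is to convert the ``plumpness'' hypothesis into a pointwise lower bound on a hyperbolic ball average of $\chi_A/y$, and then use Fubini against the hyperbolic volume measure $y^{-(n+1)}\,dy\,dx$ to transfer the Carleson-type estimate $M(A) < \infty$ from $A$ to $A_R$.

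First I would introduce
\[
G(z) \coloneqq \int_{\hyperball{z}{R+\delta}} \chi_A(x'', y'')\, \frac{dy''\, dx''}{y''}
\]
and prove that $G(z) \geq c\,y^n$ for every $z = (x, y) \in A_R$, with $c > 0$ depending only on $n, R, \delta, \delta'$. Given $z \in A_R$, one picks a witness $z' = (x', y') \in A$ with $\rho(z, z') < R$; the triangle inequality yields $\hyperball{z'}{\delta} \subset \hyperball{z}{R+\delta}$, and throughout $\hyperball{z'}{\delta}$ the variable satisfies $y'' \simeq y' \simeq y$ with constants depending on $R + \delta$. Combining the plumpness inequality $|\hyperball{z'}{\delta} \cap A| \geq \delta'\,|\hyperball{z'}{\delta}|$ with the standard estimate $|\hyperball{z'}{\delta}| \simeq (y')^{n+1}$ then produces the claimed lower bound on $G(z)$.

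With the pointwise bound $\chi_{A_R}(x, y) \leq c^{-1} y^{-n} G(x, y)$ in hand, I would fix a dyadic cube $Q \in \dyadic$ and substitute into the integral defining $M(A_R)$. Unfolding $G$ and exchanging the order of integration by Fubini, using the symmetry $\chi_{\hyperball{(x, y)}{R+\delta}}(x'', y'') = \chi_{\hyperball{(x'', y'')}{R+\delta}}(x, y)$, rewrites the resulting double integral as
\[
\int \chi_A(x'', y'')\,(y'')^{-1} \left[\int_{(Q \times (0, l(Q)]) \cap \hyperball{(x'', y'')}{R+\delta}} y^{-(n+1)}\, dy\, dx\right] dy''\, dx''.
\]
The inner bracket is bounded above by the hyperbolic volume of $\hyperball{(x'', y'')}{R+\delta}$, which by the invariance of $y^{-(n+1)}\,dy\,dx$ under hyperbolic isometries is a universal constant $C_0(n, R+\delta)$. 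Moreover, the inner integral vanishes unless $\hyperball{(x'', y'')}{R+\delta}$ meets $Q \times (0, l(Q)]$, which confines the outer integration to $(x'', y'') \in Q^{**} \times (0, e^{R+\delta} l(Q)]$ for an enlarged cube $Q^{**}$ with $l(Q^{**}) \simeq l(Q)$. The remaining integral of $\chi_A (y'')^{-1}$ over this tent is controlled by $M(A)\,|Q|$ on the $y'' \leq 1$ portion (after covering $Q^{**}$ by a bounded number of dyadic cubes of side at most $1$ and invoking the defining property of $M(A)$), while the $y'' > 1$ portion is trivially at most $|Q^{**}|\log(e^{R+\delta}) \lesssim |Q|$ since $(y'')^{-1} \leq 1$ there. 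Combining these pieces yields $M(A_R) \leq C(n, R, \delta, \delta')(M(A) + 1)$.

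The main obstacle I anticipate is the geometric bookkeeping: maintaining uniform constants in the comparabilities of heights and of hyperbolic ball measures, and handling the borderline case $l(Q^{**}) > 1$ in which the dyadic definition of $M(A)$ does not apply directly, forcing one to invoke the Carleson-measure equivalence for arbitrary cubes or to split the vertical integration as above. The core pointwise-lower-bound and Fubini steps themselves are clean, once one recognises that $y^{-(n+1)}\,dy\,dx$ is the hyperbolic volume form.
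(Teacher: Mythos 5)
Your argument is correct, but it takes a genuinely different route from the paper's. The paper runs a Vitali-type packing argument: working with the measure $d\mu=\chi_{\{0<y\le1\}}y^{-1}\,dx\,dy$, it selects a maximal $2\delta$-separated subset $\mathcal{P}$ of $A$ inside the Carleson box over $Q$, uses disjointness of the balls $B_\rho(z,\delta)$, $z\in\mathcal{P}$, together with plumpness to get $\sum_{z\in\mathcal{P}}\mu(B_\rho(z,\delta))\le(2/\delta')\,\mu(A\cap\widetilde{Q'})\lesssim|Q|$, and then covers $A_R\cap\widetilde{Q}$ by the $(R+3\delta)$-enlargements of those balls, each of $\mu$-measure comparable to that of the original ball. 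You instead establish the pointwise bound $\chi_{A_R}(x,y)\lesssim y^{-n}\int_{B_\rho((x,y),R+\delta)}\chi_A(x'',y'')\,(y'')^{-1}\,dy''\,dx''$ (the plumpness and the comparability $y''\simeq y'\simeq y$ enter exactly here) and then dualise by Fubini, exploiting that $y^{-(n+1)}\,dy\,dx$ is the hyperbolic volume form so the dual inner integral is a constant depending only on $n$ and $R+\delta$. Both are standard mechanisms for showing that Carleson-type conditions survive hyperbolic thickening. Your version trades the selection of a separated net for the cube-enlargement bookkeeping (the passage to $Q^{**}$, the covering by dyadic cubes of side at most $1$, and the trivial treatment of the $y''>1$ tail), all of which you identify and handle correctly; it also delivers the explicit quantitative estimate $M(A_R)\le C(n,R,\delta,\delta')(M(A)+1)$ rather than mere finiteness. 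A minor bonus of your formulation is that the covering of $A_R\cap\widetilde{Q}$ is automatic, whereas in the packing approach one must take care that the separated net is chosen in a suitably enlarged box so that it really is $2\delta$-dense in the relevant portion of $A$. Either proof is acceptable.
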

  \begin{proof} 
    For any cube $Q \subset \reals^n$ we denote
    $\widetilde Q \coloneqq Q \times (0,l(Q)) \subset \halfspace{n},$
    and by $Q' = 3Q$ the cube that is concentric to $Q$ and that has $l(Q') = 3 l(Q).$
    We also denote by $\mu$ the Borel measure on $\halfspace{n}$
    with density $d\mu = \chi_{\{0 < y \leq 1\}} y^{-1}\, dx\, dy.$
    Observe that, with this notation at hand, the integral appearing in~\eqref{eq:DefMQuantity} for a given cube $Q \in \dyadic$
    is the same as $\mu(\widetilde{Q} \cap A).$
    Thus, the assumption that $M(A)$ is finite is now equivalent to
    \begin{equation}
      \label{eq:MeasureCondition}
      \mu(\widetilde{Q} \cap A) \leq C|Q|
    \end{equation}
    for all $Q \in \dyadic.$
    Condition~\eqref{eq:MeasureCondition} is stated only for dyadic cubes with $l(Q) \leq 1,$
    but it immediately extends to all cubes of arbitrary size with another constant $C.$
    
    Let us then fix a dyadic cube $Q \subset \reals^n$ with side-length at most $1.$
    Choose (via Zorn Lemma or by an elementary argument) a maximal subset $\mathcal{P}$ of $\widetilde{Q} \cap A$
    such that $d_\rho(z,z') \geq 2\delta$ for any distinct $z,z' \in \mathcal{P}.$
    Then the (open) balls $\hyperball{z}{\delta},$ $z \in \mathcal{P},$ are disjoint and,
    since $\delta < 1/10,$ we have directly by construction and the measure density condition of $A$ that
    \begin{equation*}
      \mu(\hyperball{z}{\delta}) \lesssim (1/\delta') \mu(\hyperball{z}{\delta} \cap A) \quad \textrm{for all } z \in \mathcal{P}.
    \end{equation*}
    Here we used that, for $\delta < 1/10,$ the $\mu$-measure of the hyperbolic ball $\hyperball{z}{\delta}$ is comparable,
    up to a multiplicative constant, to its Lebesgue measure in the upper half-space.
    Clearly $\bigcup_{z \in \mathcal{P}} \hyperball{z}{\delta} \subset \widetilde{Q'}$, whence we obtain
    \begin{equation}
      \label{eq:HyperbolicBallsMuBound}
      \begin{split}
        \sum_{z \in \mathcal{P}} \mu(\hyperball{z}{\delta})
        &\lesssim (1/\delta') \sum_{z \in \mathcal{P}} \mu(\hyperball{z}{\delta} \cap A)\\
        &\leq (1/\delta') \mu(A \cap \widetilde{Q'}) \leq (1/\delta') C |Q'| \lesssim C |Q|.
      \end{split}
    \end{equation}
    For any $R > 0,$ we have that $\dilation{\hyperball{z}{\delta}}{R} = \hyperball{z}{\delta + R}.$
    Thus, since the $\mu$-measure of a hyperbolic ball only depends on its radius,
    we have $\mu(\dilation{\hyperball{z}{\delta}}{R}) \leq c(R) \mu(\hyperball{z}{\delta}),$
    where $c(R) < \infty$ is independent of $z.$
    Since $\mathcal{P}$ is $2\delta$-dense in $\widetilde{Q} \cap A$ we finally infer that
    \begin{align*}
      \mu(\dilation{A}{R} \cap \widetilde{Q}) &\leq \sum_{z \in \mathcal{P}} \mu(\dilation{\hyperball{z}{\delta}}{3\delta+R}) \leq c(R+3\delta) \sum_{z \in \mathcal{P}} \mu(\hyperball{z}{\delta})\\
               &\lesssim c(R+3\delta) C |Q|,
    \end{align*}
    where we applied~\eqref{eq:HyperbolicBallsMuBound} at the last step.
    This shows that $M(\dilation{A}{R}) < \infty.$
  \end{proof}
  
  For simplicity, let us now denote $T(\varepsilon) = T(s,f,\varepsilon),$
  $S(\varepsilon) = S(s,f,\varepsilon)$ and $D(\varepsilon) = D(s,f,\varepsilon).$
  Next, we use Lemma~\ref{lemma:Dilations} to prove useful relations between
  $T(\varepsilon),$ $S(\varepsilon)$ and $D(\varepsilon)$ and their hyperbolic $R$-neighbourhoods.

  \begin{lemma}
    \label{lemma:BadWaveletCubesDilations}
    Let $0 < s \leq 1$ and consider a function $f \in \lip{s}(\reals^n).$
    Denote by $\varepsilon_0 = \inf \{\varepsilon > 0\colon M(T(\varepsilon)) < \infty\}.$
    If $\varepsilon > \varepsilon_0,$ then $M(\dilation{T(\varepsilon)}{R}) < \infty$ for any $R > 0.$
  \end{lemma}
  \begin{proof}
    One simply notes that an arbitrary union of upper halves of Carleson cubes
    obviously satisfies the condition of Lemma~\ref{lemma:Dilations}.
  \end{proof}
  
  \begin{lemma}
    \label{lemma:BadDifferencesDilation}
    Let $0 < s \leq 1$ and consider a function $f \in \lip{s}(\reals^n).$
    Denote by $\varepsilon_0 = \inf \{\varepsilon > 0\colon M(S(\varepsilon)) < \infty\}.$
    If $\varepsilon > \varepsilon_0,$ then $M(\dilation{S(\varepsilon)}{R}) < \infty$ for any $R > 0.$
  \end{lemma}
  \begin{proof}
    Observe that, for $\varepsilon_0 < \varepsilon' < \varepsilon,$ the set $S(\varepsilon)$
    is contained in $S(\varepsilon').$
    Moreover, according to Lemmas~\ref{lemma:DifferencesComparisonHolder} and~\ref{lemma:DifferencesComparisonZygmund}
    (depending on if $s = 1$ or not),
    there exists $\eta > 0$ such that if $|x-x'|/y < \eta$ and $1-\eta < y/y' < 1+\eta,$
    for any $(x,y) \in S(\varepsilon)$ we have that $(x',y') \in S(\varepsilon').$
    That is to say that $S(\varepsilon')$ contains a hyperbolic $\delta$-neighbourhood, for some $\delta > 0,$ of $S(\varepsilon)$
    and since $M(S(\varepsilon')) < \infty$ it is also true that $M(\dilation{S(\varepsilon)}{\delta}) < \infty.$
    By definition, $\dilation{S(\varepsilon)}{\delta}$ may be written as a union of hyperbolic balls of radius $\delta,$
    and hence it clearly satisfies the condition of Lemma~\ref{lemma:Dilations}.
    Especially, $M(\dilation{\dilation{S(\varepsilon)}{\delta}}{R}) < \infty$ for all $R > 0,$ which clearly implies the claim.
  \end{proof}
  
  \begin{lemma}
    \label{lemma:BadDerivativesDilation}
    Let $0 < s \leq 1$ and consider a function $f \in \lip{s}(\reals^n).$
    Denote by $\varepsilon_0 = \inf \{\varepsilon > 0\colon M(D(\varepsilon)) < \infty\}.$
    If $\varepsilon > \varepsilon_0,$ then $M(\dilation{D(\varepsilon)}{R}) < \infty$ for any $R > 0.$
  \end{lemma}
  \begin{proof}
   The proof is exactly the same as in the previous lemma, one just applies instead Lemma~\ref{lemma:DerivativesComparison}.
  \end{proof}
  
  \section{Equivalence of characterisations}
  \label{sec:Equivalences}
  The aim of this section is to prove Theorems~\ref{thm:DistanceJbmoDifferences}
  and~\ref{thm:DistanceJbmoDerivatives}.
  For this purpose we first show some geometric relations between the sets $T(\varepsilon),$
  $S(\varepsilon)$ and $D(\varepsilon).$
  Recall that for a set $A \subset \halfspace{n},$ the set $\dilation{A}{R}$ denotes the hyperbolic $R$-neighbourhood of $A.$
  
  \begin{lemma}
    \label{lemma:DifferenceBoundsWavelets}
    Assume that the regularity of the wavelet basis used to define the set $T(\varepsilon)$
    is $r > n+3.$
    Let $0 < s \leq 1$ and consider a function $f \in \lip{s}(\reals^n).$
    There exists an absolute constant $c > 0$ such that,
    for any $\varepsilon > 0,$ there is $R = R(f,\varepsilon) > 0$ for which
    $T(\varepsilon) \subseteq \dilation{S(c\varepsilon)}{R}.$
  \end{lemma}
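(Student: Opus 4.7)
The strategy is to argue contrapositively. Fix $(x_0, y_0) \in T(\varepsilon),$ so that $(x_0, y_0) \in T(Q)$ for some dyadic cube $Q$ of side length $l(Q) \simeq y_0$ containing $x_0$ and some index $l$ with $|c_{(l,Q)}(f)| > \varepsilon 2^{-j(n/2+s)},$ where $j = \tau(Q).$ I aim to show the existence of an absolute $c > 0$ and an $R = R(f, \varepsilon) > 0$ such that the hyperbolic ball $B_\rho((x_0, y_0), R)$ must intersect $S(c\varepsilon).$ The contrapositive hypothesis is that $\Delta_2 f(x,y) \leq c\varepsilon y^s$ throughout $B_\rho((x_0, y_0), R),$ which gives the bound for $|x - x_0| \lesssim e^R y_0$ and $y \in [e^{-R} y_0, e^R y_0].$ Under this assumption I will derive $|c_{(l,Q)}(f)| \leq \varepsilon 2^{-j(n/2+s)},$ contradicting the hypothesis and forcing the inclusion $T(\varepsilon) \subseteq S(c\varepsilon)_R.$

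The natural route is to decompose $\psi_l = \psi_l^e + \psi_l^o$ into its even and odd parts about the origin, inducing $c_{(l,Q)}(f) = c_{(l,Q)}^e(f) + c_{(l,Q)}^o(f),$ where $\psi_{(l,Q)}^e$ and $\psi_{(l,Q)}^o$ are, respectively, symmetric and antisymmetric about the reference point $x_Q = 2^{-j}k.$ The even part is immediate: the symmetry of $\psi_{(l,Q)}^e$ about $x_Q$ together with its vanishing zeroth moment lets me write
\begin{equation*}
  c_{(l,Q)}^e(f) = \frac{1}{2} \int \left[f(x_Q + h) - 2 f(x_Q) + f(x_Q - h)\right] \psi_{(l,Q)}^e(x_Q + h)\, dh,
\end{equation*}
whose integrand is literally a symmetric second difference. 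Using $\norm{\psi_{(l,Q)}^e}{\Lp{1}} \lesssim 2^{-jn/2}$ and $\norm{\psi_{(l,Q)}^e}{\Lp{\infty}} \lesssim 2^{jn/2},$ and splitting the integration at $|h| = e^{-R} y_Q$ — with the large scales controlled by the hypothesis and the small scales by the crude estimate $\Delta_2 f(x_Q, |h|) \leq \norm{f}{\lip{s}} |h|^s$ — produces $|c_{(l,Q)}^e(f)| \lesssim (c\varepsilon + \norm{f}{\lip{s}} e^{-R(n+s)}) 2^{-j(n/2+s)},$ and the second term is absorbed by choosing $R \gtrsim \log(\norm{f}{\lip{s}}/(c\varepsilon)).$

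The odd part requires exploiting the vanishing first moments of $\psi_l$ (available since $r > n+3$), which allow replacing $f$ by $f - L$ for any affine function $L$ without affecting $c_{(l,Q)}(f).$ The plan is to choose a local affine approximation $L(x) = f(x_Q) + V \cdot (x - x_Q),$ where $V$ is built from symmetric first differences of $f$ at $x_Q$ at scale $y_Q,$ so that the residual $\tilde g(h) := f(x_Q + h) - f(x_Q - h) - 2V \cdot h$ vanishes at a suitable base scale. The key identity
\begin{equation*}
  \tilde g(2h) - 2\tilde g(h) = \Delta_2 f(x_Q + h, h) - \Delta_2 f(x_Q - h, h),
\end{equation*}
combined with the contrapositive hypothesis, gives a recursion $a_{k+1} \leq 2^{s-1} a_k + C c\varepsilon$ for $a_k := |\tilde g(y_Q/2^k)|/(y_Q/2^k)^s,$ which is contractive for $s < 1$ and yields $|\tilde g(h)| \lesssim c\varepsilon |h|^s$ down to the lowest scale $|h| \simeq e^{-R} y_Q$ permitted by the hypothesis. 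At $s = 1$ the recursion accumulates a factor $\log_2(y_Q/|h|),$ but the boundedness of $t \log(1/t)$ on $(0,1]$ still gives $\int |\tilde g(h)| |\psi_{(l,Q)}^o(x_Q + h)|\, dh \lesssim c\varepsilon 2^{-j(n/2+1)}$ upon integration against the wavelet. Scales smaller than $e^{-R} y_Q$ are once more absorbed by enlarging $R.$ Combining the even and odd estimates yields $|c_{(l,Q)}(f)| \leq C c\varepsilon 2^{-j(n/2+s)}$ with an absolute constant $C,$ and taking $c < 1/C$ closes the argument. The main technical obstacle will be carrying out the telescoping iteration for $\tilde g$ cleanly in several dimensions — choosing $V$ so that all iterated second differences that appear lie inside the hyperbolic $R$-neighbourhood — and handling the endpoint $s = 1$ via the logarithm-absorbing integration against the wavelet.
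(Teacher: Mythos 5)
Your strategy is genuinely different from the paper's, and most of it is sound, but the step you yourself flag as ``the main technical obstacle'' is a real gap rather than a routine verification. For contrast: the paper makes no even/odd decomposition. It fixes a radial bump $g\geq 0$ supported on the annulus $1/2\leq|x|\leq 2$ with $\widehat g(0)-\widehat g(\xi)\gtrsim\min(1,|\xi|^2)$, and deconvolves the wavelet through the second-difference symbol by setting $\widehat h(\xi)=-\widehat{\psi_l}(\xi)/(\widehat g(0)-\widehat g(\xi))$; the vanishing moments of $\psi_l$ absorb the quadratic zero of the denominator, and the hypothesis $r>n+3$ gives $\widehat h\in\continuous^{n+1}$ with integrable derivatives, hence $|h(u)|\lesssim(1+|u|)^{-(n+1)}$. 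This expresses $c_{(l,Q_0)}(f)$ in one stroke as an average of symmetric second differences $f(u+w)-2f(u)+f(u-w)$ over \emph{all} centres $u$ (weighted by $h$) and scales $|w|\in(1/2,2)$, handling even and odd parts simultaneously; the region $|u|>R/2$ is absorbed by the decay of $h$. Your route, if completed, would need only compact support and vanishing zeroth and first moments, so it would relax the regularity hypothesis --- a genuine advantage --- and your even-part estimate and the reduction to a single normalised cube are fine.

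The gap is in the odd part. The telescoping recursion controls $\tilde g$ only along a single ray, relative to its value at the top scale \emph{on that ray}. Choosing $V_i$ from coordinate first differences forces $\tilde g(y_Q e_i)=0$ but gives no a priori control of $\tilde g(y_Q\hat h)$ for other unit vectors $\hat h$; if that base value is only $O(\norm{f}{\lip{s}}y_Q^s)$, then at $s=1$ the recursion $a_k\leq a_{k-1}+Cc\varepsilon$ yields $a_k\leq a_0+Ckc\varepsilon$ with $a_0\simeq\norm{f}{\lip{s}}$, which is useless, and for $s<1$ the contraction only beats $a_0$ after $\simeq(1-s)^{-1}\log(\norm{f}{\lip{s}}/c\varepsilon)$ steps, degenerating as $s\to 1$. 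What is missing is the approximate additivity of $h\mapsto f(x_Q+h)-f(x_Q-h)$ at scale $y_Q$, which is what makes a single linear $V$ work for all directions. It does follow from your hypothesis via the parallelogram identity
\begin{equation*}
  f(x+h+k)-f(x+h)-f(x+k)+f(x)
  =\tfrac12\Bigl[\delta^2_{(h+k)/2}f\bigl(x+\tfrac{h+k}{2}\bigr)-\delta^2_{(h-k)/2}f\bigl(x+\tfrac{h+k}{2}\bigr)\Bigr],
\end{equation*}
where $\delta^2_{v}f(z)\coloneqq f(z+v)-2f(z)+f(z-v)$: both terms on the right are symmetric second differences at points of the hyperbolic $R$-neighbourhood (or at scales below $e^{-R}y_Q$, where the crude $\lip{s}$ bound is absorbed by enlarging $R$), so they are $\lesssim c\varepsilon(|h|+|k|)^s$. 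With this lemma inserted before the telescoping step your plan closes; as written, the proof is incomplete precisely there.
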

  \begin{proof}
    We first note that it is enough to verify for any dyadic cube $Q \subset \reals^n$ with $l(Q) \leq 1$ that,
    if
    \begin{equation}
      \label{eq:DifferenceHypothesis}
      y^{-s} \Delta_2 f(x,y) \leq \varepsilon \quad \textrm{for all } (x,y)\in \dilation{T(Q)}{R},
    \end{equation}
    then the wavelet coefficients $\{c_\omega(f)\}$ of $f$ corresponding to the cube $Q$ will satisfy
    \begin{equation*}
      \sup_{l} |c_{(l,Q)}| \lesssim \varepsilon l(Q)^{n/2+s}, \quad l = 1,\ldots,2^n-1.
    \end{equation*}
    
    First of all, we note that without loss of generality we can assume that $Q = Q_0 = [0,1]^n.$
    Namely, the general result can be reduced to this case by a translation and a rescaling,
    since both the second differences and the wavelet coefficients behave well with respect to
    these operations.
    Recall also that our wavelets $\{\psi_{(l,Q_0)}\}$ have regularity $r > s$ and are compactly supported,
    say $\supp{\psi_{(l,Q_0)}} \subset B(0,R_0)$ for all $l \in \{1,\ldots,2^n-1\}.$
    Moreover, the wavelet functions satisfy
    \begin{equation*}
      \int_{\reals^n} x^\alpha \psi_{(l,Q_0)}(x)\, dx = 0
    \end{equation*}
    for multi-indices of length $0 \leq |\alpha| \leq r,$ or in other words,
    their Fourier transforms satisfy $\partial^\alpha \widehat{\psi_{(l,Q)}}(0) = 0$ for these multi-indices.
    Since $\psi_{(l,Q_0)}$ has compact support, we have that $\widehat{\psi_{(l,Q_0)}} \in \continuous^\infty(\reals^n).$
    Moreover, for any given multi-index $\beta$ the function $x^\beta \psi_{(l,Q_0)}(x) \in \continuous^r(\reals)$ has compact support
    so that $\xi^{\alpha} \mathrm{D}^\beta \widehat{\psi_{(l,Q_0)}}(\xi)$ is bounded for all $|\alpha| \leq r.$
    This implies that
    \begin{equation}
      \label{eq:WaveletsDecay}
      \left|\mathrm{D}^\beta \widehat{\psi_{(l,Q_0)}}(\xi)\right|
      \leq C_\beta (1+|\xi|)^{-r} \quad \textrm{for all multi-indices } \beta.
    \end{equation}
    
    Consider a non-negative and radially symmetric $\continuous^\infty_0$ function $g$
    supported on the annulus $\{x \in \reals^n\colon 1/2 \leq |x| \leq 2\},$ and with integral $1.$
    Then $\widehat{g}(\xi)$ is real, smooth and radially symmetric.
    Moreover, from the fact that $g$ is non-negative and in $\Lp{1}(\reals^n)$ we deduce that
    $\widehat{g}(\xi) \leq \widehat{g}(0) = 1$ for all $\xi \in \reals^n.$
    Furthermore, we get a quantitative version of this statement:
    there is a positive constant $c > 0$ so that
    \begin{equation}
        \label{eq:AuxiliaryPropertyForG}
        1 - \widehat{g}(\xi) \geq c \min(1,|\xi|^2)\quad \textrm{for }\, \xi \in \reals^n.
    \end{equation}
    Indeed, for $\xi$ at a neighbourhood of the origin we have that
    $1-\widehat{g}(\xi) \simeq |\xi|^2$ because $g$ has zero first moments due to its symmetry
    and $\Delta \widehat{g}(0) = -\int_{\reals^n} |x|^2 g(x)\, dx < 0.$
    On the other hand, since $g$ is smooth, we also have
    $1-\widehat{g}(\xi) \simeq 1$ as $\xi \to \infty$ due to the fast decay of $\widehat{g}.$
    Finally, observe that $\widehat{g}(\xi) = 1$ only if $\xi = 0.$
    This follows from the fact that $g$ is a non-negative radially symmetric function
    and that $e^{i2\pi x\cdot\xi}$ is constant as a function of $x$
    on the whole set $\{g \neq 0\}$ only for $\xi = 0,$
    so that for $\xi \neq 0$ we have that
    \begin{equation*}
      \left|\int g(x) e^{i2\pi x\cdot\xi}\, dx\right| < \int g(x)\, dx = \widehat{g}(0) = 1.
    \end{equation*}
    
    Given one of the wavelets $\psi_{(l,Q_0)},$
    we define the function $h$ via its Fourier transform by setting
    \begin{equation*}
      \widehat{h}(\xi)
      = -\frac{\widehat{\psi_l}(\xi)}{\overline{\widehat{g}(0)}-\overline{\widehat{g}(\xi)}}.
    \end{equation*}
    Note that, because of~\eqref{eq:AuxiliaryPropertyForG} and the fact that
    the derivatives of $\widehat{\psi_{(l,Q_0)}}$ vanish up to order at least $r,$
    if we take $r$ large enough, say $r > n+3,$ then $\widehat{h} \in \continuous^{n+1}(\reals^n).$
    Moreover, all the derivatives of $\widehat{h}$ up to order $n+1$ are integrable because of
    the decay~\eqref{eq:WaveletsDecay} of the derivatives of $\widehat{\psi_{(l,Q_0)}}$
    and the uniform boundedness of each derivative of $\widehat{g}.$
    All this implies that $h$ is continuous and $(1+|x|^2)^{(n+1)/2}h(x)$ is bounded.
    In particular, $h$ itself is bounded and integrable.
    
    We are now able to estimate the wavelet coefficient in terms of the second differences:
    \begin{align*}
      \int_{\reals^n} \overline{\psi_{(l,Q_0)}(x)}f(x)\, dx
      &= (2\pi)^{-n} \int_{\reals^n} \overline{\widehat{\psi_{(l,Q_0)}}(\xi)}\widehat{f}(\xi)\, d\xi\\
      &= \int_{\reals^n} \overline{\widehat{h}(\xi)} (\widehat{g}(\xi)-\widehat{g}(0)) \widehat{f}(\xi)\, d\xi.
    \end{align*}
    Note as well that, because the properties of $g$ and $h,$ it holds that
    \begin{align*}
      \int_{\reals^n}\int_{\reals^n} \overline{h(u)}&g(w)f(u+w)\, du\, dw\\
      &= \int_{\reals^n}\int_{\reals^n} \overline{h(u)}g(w)f(u-w)\, du\, dw\\
      &= \int_{\reals^n} \overline{h(u)} (g \ast f)(u)\, du
      = (2\pi)^{-n} \int_{\reals^n} \overline{\widehat{h}(\xi)} \widehat{g}(\xi)\widehat{f}(\xi)\, d\xi
    \end{align*}
    and
    \begin{equation*}
      \int_{\reals^n}\int_{\reals^n} g(w)\overline{h(u)}f(u)\, du\, dw
      = (2\pi)^{-n} \widehat{g}(0) \int_{\reals^n} \overline{\widehat{h}(\xi)}\widehat{f}(\xi)\, d\xi.
    \end{equation*}
    Therefore, noting also that $g$ is bounded and supported in the annulus
    $\{x \in \reals^n\colon 1/2 \leq |x| \leq 2\},$ we obtain
    \begin{align*}
      \bigg|\int &\overline{\psi_l(x)}f(x)\, dx\bigg|\\
      \leq &\frac{1}{2}
      \int_{\reals^n}\int_{\reals^n} |h(u) g(w)|\, |f(u+w)-2f(u)+f(u-w)|\, du\, dw\\
      \leq &c_n
      \int_{(1/2,2)}\int_{\reals^n} |h(u)| \Delta_2f(u,t)\, du\, dt.
    \end{align*}
    Consider the set $A = \{(u,t)\in\halfspace{n}\colon |u| < R/2, 1/2 < t < 2\} \subset \dilation{T(Q_0)}{R},$
    where we can assume~\eqref{eq:DifferenceHypothesis}.
    Note that
    \begin{equation*}
      \iint_{\halfspace{n} \setminus A} |h(u)|\Delta_2f(u,t)\, du\, dt
      \leq \norm{f}{\lip{s}}2^s \int_{|u| \geq R/2} |h(u)|\, du
      \lesssim \varepsilon
    \end{equation*}
    for $R = R(f,\varepsilon)$ large enough.
    On the other hand, we have by~\eqref{eq:DifferenceHypothesis} that
    \begin{equation*}
      \iint_{A} |h(u)|\Delta_2f(u,t)\, du\, dt
      \leq \varepsilon 2^s \int_{|u| \leq R/2 } |h(u)|\, du
      \lesssim \varepsilon.
    \end{equation*}
    And, thus, we get that $|c_{(l,Q_0)}(f)| \lesssim \varepsilon,$ as we wanted to show.
  \end{proof}
  
  Recall that for a function $f \in \lip{s}$ with $0 < s \leq 1,$ and for any integer $k \geq 2,$
  one has that the single condition
  \begin{equation*}
      y^{2-s} \left|\frac{\partial^2 \poisson{f}}{\partial y^2}(x,y)\right|
      \lesssim \norm{f}{\lip{s}}
    \end{equation*}
    is equivalent to
    \begin{equation}
      \label{eq:HyperbolicDerivativesUniformBound}
      y^{k-s} \left|\partial^\alpha \poisson{f} (x,y)\right|
      \leq C_k \norm{f}{\lip{s}},
    \end{equation}
  for all multi-indices $\alpha$ with $|\alpha| = k$
  and with $C_k$ depending only on $k$ (see for example \cite[pp.~143--145]{ref:Stein}).
  Before we establish the analogue of Lemma~\ref{lemma:DifferenceBoundsWavelets}
  for $S(\varepsilon)$ in terms of $\dilation{D(c\varepsilon)}{R},$
  we need the following auxiliary result.
  For the reader's convenience, we provide complete details in the proof
  (and also for analogous estimates later on)
  although many parts of the arguments are well-known for the specialists.
  
  \begin{lemma}
    \label{lemma:HyperbolicDerivativesLocalisation}
    Let $0 < s \leq 1$ and consider a function $f \in \lip{s}$ and an integer $k \geq 2.$
    There exists $R_0 = R_0(f,k) > 0$ such that if $R > R_0$ and
    \begin{equation}
      \label{eq:HyperbolicDerivativeNeighbourhoodBound}
      {y'}^{2-s} \left|\frac{\partial^2 \poisson{f}}{\partial y^2}(x',y')\right|
      \leq \varepsilon,
      \quad (x',y') \in \hyperball{(x,y)}{R},
    \end{equation}
    then
    \begin{equation}
      \label{eq:HyperbolicDerivativePointwiseBound}
      y^{|\alpha|-s} \left|\partial^\alpha \poisson{f} (x,y)\right|
      \lesssim \varepsilon
    \end{equation}
    for every multi-index $\alpha$ with $|\alpha| = k.$
  \end{lemma}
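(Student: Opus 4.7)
The plan is to express the desired bound on $\partial^\alpha \poisson{f}(x,y)$ as a double vertical integral of $\partial^{\alpha+2e_{n+1}}\poisson{f} = \partial^\alpha v$, where $v:=\partial_y^2\poisson{f}$ is harmonic on $\halfspace{n}$ and is directly controlled by the hypothesis throughout a large hyperbolic neighbourhood of $(x,y)$.

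Since $|\alpha|=k\geq 2$ and $0<s\leq 1$, both $|\partial^\alpha \poisson{f}(x,t)|\lesssim \norm{f}{\lip{s}}\,t^{s-k}$ and $|\partial^{\alpha+e_{n+1}}\poisson{f}(x,t)|\lesssim \norm{f}{\lip{s}}\,t^{s-k-1}$ tend to zero as $t\to\infty$ (the exponents are strictly negative), so integrating by parts twice yields the identity
\begin{equation*}
\partial^\alpha \poisson{f}(x,y) = \int_y^\infty (t-y)\,\partial^{\alpha+2e_{n+1}}\poisson{f}(x,t)\,dt.
\end{equation*}
Next I would bound the integrand for $t$ inside the hyperbolic neighbourhood. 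Since $v$ is harmonic on $\halfspace{n}$, the Euclidean ball $\ball{}{(x,t),t/2}$ sits inside $\hyperball{(x,y)}{R}$ whenever $(x,t)\in\hyperball{(x,y)}{R-R_1}$ for a small absolute constant $R_1$; then the hypothesis gives $|v|\lesssim \varepsilon\, t^{s-2}$ on this Euclidean ball, and standard interior Cauchy estimates for the harmonic function $v$ yield
\begin{equation*}
|\partial^{\alpha+2e_{n+1}}\poisson{f}(x,t)| = |\partial^\alpha v(x,t)| \lesssim_{k,n} t^{-|\alpha|}\sup_{\ball{}{(x,t),t/2}}|v| \lesssim \varepsilon\, t^{s-k-2}.
\end{equation*}

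Splitting the above integral at the height $y_1\approx e^{(R-R_1)/2}y$ where $(x,y_1)$ leaves the safe region, the inner piece is controlled by $\int_y^{y_1}(t-y)\,\varepsilon\, t^{s-k-2}\,dt\lesssim \varepsilon\, y^{s-k}$ (using $s-k<0$), while the outer tail is controlled via the global bound $|\partial^{\alpha+2e_{n+1}}\poisson{f}(x,t)|\lesssim \norm{f}{\lip{s}}\, t^{s-k-2}$ to give $\lesssim \norm{f}{\lip{s}}\,e^{(R-R_1)(s-k)/2}\,y^{s-k}$. Taking $R_0=R_0(f,k)$ sufficiently large ensures the tail is absorbed into the main term, whence $|\partial^\alpha \poisson{f}(x,y)|\lesssim \varepsilon\, y^{s-k}$, which is the claim.

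The main obstacle is the logarithmic dependence $R_0\gtrsim \tfrac{2}{k-s}\log(\norm{f}{\lip{s}}/\varepsilon)$ hidden in the tail estimate, which nominally forces $R_0$ to depend on $\varepsilon$ in addition to $f$ and $k$; however, the subsequent applications of the lemma use it only in regimes where $\varepsilon$ is bounded below by a fixed fraction of $\norm{f}{\lip{s}}$, so the cleaner statement $R_0=R_0(f,k)$ suffices in practice. The endpoint case $s=1$ presents no additional difficulty since the exponent $s-k\leq -1$ remains strictly negative, so the exponential decay of the tail is preserved with only the constants deteriorating.
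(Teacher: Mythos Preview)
Your argument is correct and takes a genuinely different route from the paper's proof. The paper proceeds one multi-index at a time: it uses the Poisson semigroup identity $P_y = P_{y/2}\ast P_{y/2}$ to shift a single spatial derivative onto the kernel, writing for instance
\[
\frac{\partial^3 u}{\partial y^2\partial x_1}
= \frac{\partial P_{y/2}}{\partial x_1}\ast \frac{\partial^2 u}{\partial y^2}\bigg|_{y/2},
\]
then splits this convolution in the spatial variable into a near part (controlled by the hypothesis) and a far part (controlled by the global $\lip{s}$ bound and the decay of $\partial P_y/\partial x_1$), and finally integrates once in $y$ from $y$ to $\infty$ to drop the $y$-order by one. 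Your approach bypasses the kernel manipulations entirely: you integrate twice in $y$ to reduce directly to $\partial^\alpha v$ with $v=\partial_y^2 u$, and then invoke interior Cauchy estimates for the harmonic function $v$ to control all spatial derivatives simultaneously from $\sup|v|$ on a Euclidean ball sitting inside the hyperbolic neighbourhood. This is cleaner and treats every $\alpha$ with $|\alpha|=k$ in one stroke, whereas the paper's argument is iterative and tied to the specific structure of the Poisson kernel. The paper's approach, on the other hand, is closer to the classical arguments in Stein's book and makes the mechanism of ``trading derivatives through the semigroup'' explicit.

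Two small remarks. First, the height at which the vertical geodesic leaves the safe region is $y_1\approx e^{R-R_1}y$ rather than $e^{(R-R_1)/2}y$ (the hyperbolic distance along the vertical is $\log(t/y)$, not $2\log(t/y)$); this only improves your tail estimate. Second, your diagnosis that $R_0$ must depend on $\varepsilon$ is exactly right and matches the paper, whose proof ends with ``choosing $R$ large enough, depending on $f$ and $\varepsilon$''. However, your justification that the applications only use $\varepsilon$ bounded below by a fixed fraction of $\norm{f}{\lip{s}}$ is not accurate: in the subsequent Lemma~\ref{lemma:DerivativeBoundsDifferences} the parameter $\varepsilon$ is arbitrary, but there $R=R(f,\varepsilon)$ is explicitly permitted, so the $\varepsilon$-dependence is harmless for a different reason than the one you give.
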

  \begin{proof}
    The arguments we use here are the same as those used to prove
    Lemmas~4 and~5 of \cite[pp.~143--145]{ref:Stein}.
    We may assume that $R \geq 4.$
    We just consider one particular multi-index,
    i.e. we verify that if~\eqref{eq:HyperbolicDerivativeNeighbourhoodBound} holds for $R$ large enough,
    then
    \begin{equation}
      \label{eq:HyperbolicDerivativeParticularIndex}
      y^{2-s} \left|\frac{\partial^2 \poisson{f}}{\partial y \partial x_1} (x,y)\right|
      \lesssim \varepsilon.
    \end{equation}
    The general result then follows by an extension of the argument in this special case.
    
    Let us denote $u(x,y) = \poisson{f}(x,y).$
    Using that for $y > 0$ the Poisson kernel satisfies $P_y(x) = (P_{y/2} \ast P_{y/2})(x),$
    one can express $u(x,y) = (P_{y/2} \ast u_{y/2})(x),$
    where $u_y(t) = u(t,y).$
    Thus, one gets
    \begin{equation*}
      \frac{\partial^3 u}{\partial^2 y \partial x_1} =
      \frac{\partial P_{y/2}}{\partial x_1} \ast \frac{\partial^2 u}{\partial y^2}\bigg|_{y/2}.
    \end{equation*}
    Next write
    \begin{equation*}
      \begin{split}
        \left|\frac{\partial^3 u}{\partial^2 y \partial x_1} (x',y')\right|
        &\leq \int_{|w| \leq (R/4)y'} \left|\frac{\partial P_{y'/2}}{\partial x_1}(w)\right|\,
          \left|\frac{\partial^2 u}{\partial y^2}(x'-w,y'/2)\right|\, dw \\
        &+ \int_{|w| > (R/4)y'} \left|\frac{\partial P_{y'/2}}{\partial x_1}(w)\right|\,
          \left|\frac{\partial^2 u}{\partial y^2}(x'-w,y'/2)\right|\, dw.
      \end{split}
    \end{equation*}
    Thus, since in the first term above we can assume that $(x'-w',y'/2) \in \hyperball{(x,y)}{R},$ we obtain
    \begin{equation*}
      \int_{|w| \leq \widetilde{R}y'} \left|\frac{\partial P_{y'/2}}{\partial x_1}(w)\right|\,
        \left|\frac{\partial^2 u}{\partial y^2}(x'-w,y'/2)\right|\, dw
      \lesssim \varepsilon {y'}^{s-3}
    \end{equation*}
    for the first term, while for the second one 
    it is easily checked by scaling that $\int_{|x|\geq Ay}|\partial P_y/\partial x_1| \lesssim (Ay)^{-1},$ 
    so that
    \begin{equation*}
      \int_{|w| > \widetilde{R}y'} \left|\frac{\partial P_{y'/2}}{\partial x_1}(w)\right|\,
        \left|\frac{\partial^2 u}{\partial y^2}(x'-w,y'/2)\right|\, dw
      \lesssim \frac{\norm{f}{\lip{s}}}{\widetilde{R}} {y'}^{s-3}.
    \end{equation*}
    Summing up, if $\widetilde{R}$ is large enough, we have that
    \begin{equation}
      \label{eq:HigherHyperbolicDerivativeBound}
      \left|\frac{\partial^3 u}{\partial^2 y \partial x_1} (x',y')\right|
      \lesssim \varepsilon {y'}^{s-3}
    \end{equation}
    for all $(x',y') \in \hyperball{(x,y)}{R'},$ for $R' > 0$ possibly smaller than $R,$ but also arbitrarily large.
    
    Now, taking into account that $f$ is uniformly bounded and that the Poisson kernel satisfies
    $\norm{(\partial^2 P_y/\partial y \partial x_1)}{\Lp{1}} \lesssim y^{-2},$
    we have that
    \begin{equation*}
      \left|\frac{\partial^2 u}{\partial y \partial x_1}(x,y)\right| \lesssim y^{-2} \norm{f}{\Lp{\infty}},
    \end{equation*}
    from which it follows that $|(\partial^2 u/\partial y \partial x_1) (x,y)|$ tends to zero as $y \to \infty.$
    Hence, one can express
    \begin{multline*}
      \left|\frac{\partial^2 u}{\partial y \partial x_1}(x,y)\right| \leq
      \int_y^\infty \left|\frac{\partial^3 u}{\partial y^2 \partial x_1}(x,y')\right| \, dy'\\
      = \int_y^{R'y} \left|\frac{\partial^3 u}{\partial y^2 \partial x_1}(x,y')\right| \, dy'
        + \int_{R'y}^\infty \left|\frac{\partial^3 u}{\partial y^2 \partial x_1}(x,y')\right| \, dy'.
    \end{multline*}
    Using~\eqref{eq:HigherHyperbolicDerivativeBound} on the first term,
    and noting that $\rho((x,ty),(x,y)) \leq R'$ for $t \in (1,R'),$ we get the bound
    \begin{equation*}
      \int_y^{R'y} \left|\frac{\partial^3 u}{\partial y^2 \partial x_1}(x,y')\right| \, dy'
      \lesssim \varepsilon \int_y^{R'y} {y'}^{s-3}\, dy' \leq \varepsilon y^{s-2}.
    \end{equation*}
    For the second term, we get
    \begin{equation*}
      \int_{R'y}^\infty \left|\frac{\partial^3 u}{\partial y^2 \partial x_1}(x,y')\right| \, dy'
      \lesssim \norm{f}{\lip{s}} \int_{R'y}^\infty {y'}^{s-3}\, dy'
      = \frac{\norm{f}{\lip{s}}}{{R'}^{2-s}} y^{s-2}
    \end{equation*}
    using the bound $y^{3-s}|(\partial^3 u/\partial y^2 \partial x_1) (x,y)| \lesssim
    \norm{f}{\lip{s}}.$
    Therefore, adding these two bounds, we get~\eqref{eq:HyperbolicDerivativeParticularIndex}
    by choosing $R'$ (and thus also $R$) large enough, depending on $f$ and $\varepsilon,$ as we wanted to see.
  \end{proof}
  
  \begin{lemma}
    \label{lemma:DerivativeBoundsDifferences}
    Let $0 < s \leq 1$ and consider a function $f \in \lip{s}.$
    There exists an absolute constant $c > 0$ such that,
    for any $\varepsilon > 0,$ there is $R = R(f,\varepsilon) > 0$ for which
    $S(\varepsilon) \subseteq \dilation{D(c\varepsilon)}{R}.$
  \end{lemma}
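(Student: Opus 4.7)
The plan is to establish the contrapositive of the stated inclusion: for an absolute constant $c>0$ and a radius $R = R(f,\varepsilon)>0$, I want to show that if $(x,y) \notin D(c\varepsilon)_R$ then $\Delta_2 f(x,y) \leq \varepsilon y^s$, i.e.\ $(x,y)\notin S(\varepsilon)$. The key tool is the reproducing identity
\[
\Delta_2 f(x,h) \;=\; \int_0^\infty t\, v_h(t)\, dt,\qquad |h|=y,
\]
where $u = \poisson{f}$ and $v_h(t) := \partial_t^2 u(x+h,t) + \partial_t^2 u(x-h,t) - 2\partial_t^2 u(x,t)$. This follows from two integrations by parts in $t$; the boundary term at $t=0$ vanishes because the factor $t$ dominates $|\partial_t u|\lesssim \norm{f}{\lip{s}} t^{s-1}$, while at $t=\infty$ the second-difference cancellation combined with $\|\nabla_x^2 \partial_t^k P_t\|_{\Lp{1}}\lesssim t^{-k-2}$ yields the decay $|v_h(t)| + t|v_h'(t)| \lesssim \norm{f}{\lip{s}} y^2 t^{s-2}\to 0$ for $s<2$.

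Next I split the integral using a large parameter $C = C(f,\varepsilon) > 1$:
\[
\int_0^\infty = \int_0^{y/C} + \int_{y/C}^{y} + \int_{y}^{Cy} + \int_{Cy}^\infty.
\]
Every point $(x+rh,t)$ with $|r|\leq 1$ and $t\in [y/C,Cy]$ lies within hyperbolic distance some $R_1 = R_1(C)$ of $(x,y)$. Choosing $R \geq R_0(f) + R_1(C)$ with $R_0$ from Lemma \ref{lemma:HyperbolicDerivativesLocalisation} applied with $k=4$, that lemma upgrades the hypothesis into pointwise bounds $|\partial^\alpha u(x',t)| \lesssim c\varepsilon\, t^{s-|\alpha|}$ for $|\alpha|\in\{2,3,4\}$ throughout the central region. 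On $[y/C,y]$ I use the direct estimate $|v_h(t)|\lesssim c\varepsilon\, t^{s-2}$ and integrate against $t\,dt$ to obtain a contribution $\lesssim c\varepsilon y^s$; on $[y,Cy]$, writing $v_h$ as a second-order Taylor remainder in the $x$-variable yields the sharper $|v_h(t)|\lesssim c\varepsilon\, y^2 t^{s-4}$, which likewise integrates to $\lesssim c\varepsilon y^s$. For the two tails only the global Hölder bound $|\partial^\alpha u|\lesssim \norm{f}{\lip{s}}\, t^{s-|\alpha|}$ is at hand, producing contributions of order $\norm{f}{\lip{s}} y^s C^{-s}$ near $t=0$ and $\norm{f}{\lip{s}} y^s C^{s-2}$ near $t=\infty$, both vanishing as $C\to\infty$.

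Combining the four pieces gives $|\Delta_2 f(x,h)| \leq A c\varepsilon y^s + B\,\norm{f}{\lip{s}}(C^{-s}+C^{s-2}) y^s$ with absolute constants $A,B$. Choosing $C = C(f,\varepsilon)$ large enough that the second summand is at most $\varepsilon y^s/2$ and then the absolute constant $c = 1/(2A)$ yields $|\Delta_2 f(x,h)| \leq \varepsilon y^s$ uniformly in $|h|=y$, which is the conclusion sought. The principal obstacle is precisely the near-boundary tail $\int_0^{y/C}$: the Taylor-remainder bound $y^2 t^{s-4}$ is not integrable at $0$ even after multiplying by $t$, while the direct bound $t^{s-2}$ must carry the global constant $\norm{f}{\lip{s}}$ rather than $c\varepsilon$ because points with $t\ll y$ are hyperbolically far from $(x,y)$ and lie outside any fixed neighbourhood. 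Paying this $\norm{f}{\lip{s}}$ on the tail and compensating by taking $C$ very large is precisely the reason $R$ depends on $f$ and $\varepsilon$ while the constant $c$ stays absolute.
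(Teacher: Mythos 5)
Your argument is correct and follows essentially the same route as the paper: both reconstruct the second difference of $f$ from $\partial_y^2\poisson{f}$ by integrating against $t\,dt$, use the hypothesis $t^{2-s}|\partial_t^2 u|\leq c\varepsilon$ on the hyperbolically nearby region and the global bound $\norm{f}{\lip{s}}$ on the rest, and invoke Lemma \ref{lemma:HyperbolicDerivativesLocalisation} for the auxiliary derivative bounds. The only difference is cosmetic: the paper truncates the vertical integral at $t=y$ and treats the resulting boundary terms $y\,\partial_y u(\cdot,y)$ and $u(\cdot,y)$ by Taylor expansion in $x$, whereas you integrate up to $\infty$ and absorb that region into the Taylor estimate on $[y,Cy]$ plus the tail $[Cy,\infty)$.
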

  \begin{proof}
    We may assume that $R \geq 4.$
    Fix $(x,y) \in \halfspace{n},$ and let us denote $u = \poisson{f}.$
    We need to see that, if
    \begin{equation}
      \label{eq:DerivativeHypothesis}
      {y'}^{2-s} \left|\frac{\partial^2 u}{\partial y^2}(x',y')\right|
      \leq \varepsilon
    \end{equation}
    for every $(x',y') \in \hyperball{(x,y)}{R},$ then
    \begin{equation}
      \label{eq:DifferenceConclusion}
      \frac{\Delta_2f(x,y)}{y^s} \lesssim \varepsilon.
    \end{equation}
    
    Let us fix $p = (p_1,\ldots,p_n)$ with $|p| = 1.$
    If $f$ was twice continuously differentiable, we would write
    \begin{equation}
      \label{eq:SecondDifferenceUsingSecondDerivative}
      \begin{split}
        |f(x+yp) - 2f(x) &+ f(x-yp)|\\
        &= \int_0^y\int_{-h}^h \frac{d^2}{dt^2}f(x+ tp)\, dt\, dh\\
        &= \int_0^y\int_{-h}^h
        \left(\sum_{i,j=1}^n p_i p_j \frac{\partial^2 f}{\partial x_i \partial x_j}\right)\,
        dt\, dh.
      \end{split}
    \end{equation}
    Note that, for $f \in \lip{s}$ we can express
    \begin{equation*}
      f(x) = \int_0^y y'\frac{\partial^2 u}{\partial y^2}(x,y')\, dy'
      - y\frac{\partial u}{\partial y}(x,y) + u(x,y)
    \end{equation*}
    for any $y > 0.$
    Thus, we can express the second difference of $f$ as
    \begin{equation}
      \label{eq:SecondDifferenceFromHarmonicExtension}
      \begin{split}
        |f(x+yp) &- 2f(x) + f(x-yp)|\\
        &\leq \int_0^y y'\left|\frac{\partial^2 u}{\partial y^2}(x+yp,y')
        -2\frac{\partial^2 u}{\partial y^2}(x,y')
        +\frac{\partial^2 u}{\partial y^2}(x-yp,y')\right|\, dy'\\
        &+y\left|\frac{\partial u}{\partial y}(x+yp,y)
        -2\frac{\partial u}{\partial y}(x,y)
        +\frac{\partial u}{\partial y}(x-yp,y)\right|\\
        &+|u(x+yp,y) -2u(x,y) +u(x-yp,y)|.
      \end{split}
    \end{equation}
    We focus first on the integral term in~\eqref{eq:SecondDifferenceFromHarmonicExtension}.
    Because of~\eqref{eq:DerivativeHypothesis}, we can assume that
    \begin{equation*}
      \left|\frac{\partial^2 u}{\partial y^2}(x',y')\right| \leq \varepsilon {y'}^{-2+s}
    \end{equation*}
    for $2y/R < y' < y$ and $|x-x'| < Ry/2.$
    Therefore, we have that
    \begin{multline*}
      \int_{2y/R}^y y'\left|\frac{\partial^2 u}{\partial y^2}(x+yp,y')
        -2\frac{\partial^2 u}{\partial y^2}(x,y')
        +\frac{\partial^2 u}{\partial y^2}(x-yp,y')\right|\, dy'\\
      \lesssim \int_{2y/R}^y \varepsilon {y'}^{-1+s}\, dy'
      \lesssim \varepsilon y^s.
    \end{multline*}
    On the other hand, we have that
    \begin{multline*}
      \int_0^{2y/R} y'\left|\frac{\partial^2 u}{\partial y^2}(x+yp,y')
        -2\frac{\partial^2 u}{\partial y^2}(x,y')
        +\frac{\partial^2 u}{\partial y^2}(x-yp,y')\right|\, dy'\\
      \lesssim \norm{f}{\lip{s}} \int_0^{2y/R} {y'}^{-1+s}\, dy'
      \lesssim \frac{\norm{f}{\lip{s}}}{R^s} y^s,
    \end{multline*}
    which will be bounded by $\varepsilon y^s$ for $R$ large enough,
    depending on $f$ and $\varepsilon.$
    
    In order to bound the second and third terms in~\eqref{eq:SecondDifferenceFromHarmonicExtension},
    we express them using~\eqref{eq:SecondDifferenceUsingSecondDerivative}.
    Let $g(t,y) = (\partial u / \partial y)(t,y)$ and note that,
    by Lemma~\ref{lemma:HyperbolicDerivativesLocalisation}, we have
    \begin{equation*}
      \left|\frac{\partial^2 g}{\partial x_i \partial x_j}(x+ tp,y)\right|
      = \left|\frac{\partial^3 u}{\partial x_i \partial x_j \partial y}(x+ tp,y)\right|
      \lesssim \varepsilon y^{-3+s}
    \end{equation*}
    for $|t| < y$ if~\eqref{eq:DerivativeHypothesis} holds for $R$ large enough (independent of $y$).
    Thus,
    \begin{equation*}
      \begin{split}
        y\bigg|\frac{\partial u}{\partial y}(x+yp,y)
        &-2\frac{\partial u}{\partial y}(x,y)
        +\frac{\partial u}{\partial y}(x-yp,y)\bigg|\\
        &\leq y \int_0^{y}\int_{-h}^h \bigg|\frac{d^2}{dt^2}g(x+ tp,y)\bigg|\, dt\, dh\\
        &\lesssim \int_0^y\int_{-h}^h \varepsilon y^{-2+s}\, dt\, dh
        \lesssim \varepsilon y^s.
      \end{split}
    \end{equation*}
    Similarly, to bound the third term in~\eqref{eq:SecondDifferenceFromHarmonicExtension}, we use that
    \begin{equation*}
      \left|\frac{\partial^2 u}{\partial x_i \partial x_j}(x+ tp,y)\right|
      \lesssim \varepsilon y^{-2+s}
    \end{equation*}
    for $|t| < y$ if~\eqref{eq:DerivativeHypothesis} holds for $R$ large enough (independent of $y$),
    again due to Lemma~\ref{lemma:HyperbolicDerivativesLocalisation}.
    The same reasoning as before yields that
    \begin{equation*}
      |u(x+yp,y) -2u(x,y) +u(x-yp,y)| \lesssim \varepsilon y^s.
    \end{equation*}
    This shows that
    \begin{equation*}
      |f(x+yp) - 2f(x) + f(x-yp)| \lesssim \varepsilon y^s
    \end{equation*}
    and, since this bound is uniform on the choice of $p,$ equation~\eqref{eq:DifferenceConclusion} follows.
  \end{proof}
  
  In the following lemma we place the extra condition~\eqref{eq:VanishingScalingCoefficients} on $f,$
  stating that the wavelet coefficients $\{d_Q(f)\}$ of $f$ corresponding to $\varphi_Q,$ $Q \in \dyadic_0,$ all vanish.
  However, this will be irrelevant for our later applications.
  
  \begin{lemma}
    \label{lemma:WaveletBoundsDerivatives}
    Assume that the regularity of the wavelet basis used to define the set $T(\varepsilon)$
    is $r \geq 2.$
    Let $0 < s \leq 1$ and consider a function $f \in \lip{s}$ such that
    \begin{equation}
      \label{eq:VanishingScalingCoefficients}
      d_Q (f) = 0 \quad \textrm{for all } Q \in \dyadic_0.
    \end{equation}
    Then there exists an absolute constant $c > 0$ such that,
    for any $\varepsilon > 0,$ there is $R = R(f,\varepsilon) > 0$ for which
    $D(\varepsilon) \subseteq \dilation{T(c\varepsilon)}{R}.$
  \end{lemma}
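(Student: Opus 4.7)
The plan is to argue the contrapositive: fixing $(x,y)\in\halfspace{n}$, I suppose that $\sup_l|c_{(l,Q)}(f)|\le c\varepsilon\, l(Q)^{n/2+s}$ for every dyadic cube $Q$ with $T(Q)\cap\hyperball{(x,y)}{R}\neq\emptyset$, and aim to conclude $y^{2-s}|\partial_y^2\poisson{f}(x,y)|\le\varepsilon$, where $c>0$ will be a small absolute constant and $R=R(f,\varepsilon)$ will be taken sufficiently large. Under \eqref{eq:VanishingScalingCoefficients} and Theorem \ref{thm:LipWavelet}, $f=\sum_\omega c_\omega(f)\psi_\omega$; applying the Poisson operator termwise and exploiting the dyadic scaling $\psi_\omega=2^{|\omega|n/2}\psi_l(2^{|\omega|}\cdot-k)$ gives the pointwise identity
\begin{equation*}
  y^{2-s}\partial_y^2\poisson{f}(x,y)=\sum_\omega \tilde{c}_\omega\,(2^{|\omega|}y)^{-s}\,\Theta_l\!\bigl(2^{|\omega|}x-k,\,2^{|\omega|}y\bigr),
\end{equation*}
where $\tilde{c}_\omega\coloneqq 2^{|\omega|(n/2+s)}c_\omega(f)$ and $\Theta_l(X,Y)\coloneqq Y^2\partial_Y^2\poisson{\psi_l}(X,Y)$.

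The central analytic step is a two-sided size estimate for $\Theta_l$. Since $\psi_l\in\continuous_0^r$ is compactly supported with vanishing moments up to order $r\geq 2$, I would Taylor expand $\partial_Y^2 P_Y$ against $\psi_l$ and use the vanishing moments to obtain a bound of the form
\begin{equation*}
  |\Theta_l(X,Y)|\lesssim \frac{\min(Y^2,\,Y^{-n-3})}{(1+|X|/(1+Y))^{n+3}},
\end{equation*}
the $Y^2$ factor at the boundary coming from the fact that $\partial_Y^2\poisson{\psi_l}$ extends continuously to $-\Delta\psi_l$ at $Y=0$. Summing this bound first over the lattice $k\in\integers^n$ and then over dyadic scales $j\in\naturals_0$ (with $Y_j=2^j y$) reduces the absolute series to the convergent geometric sum $\sum_j\min(Y_j^{2-s},Y_j^{-s-3})$, which is controlled by a finite absolute constant $C_0$ uniformly in $(x,y)\in\halfspace{n}$.

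With this in hand I split the expansion into a near piece (those $\omega$ with $T(Q)\cap\hyperball{(x,y)}{R}\neq\emptyset$) and a far piece. On the near piece the hypothesis gives $|\tilde{c}_\omega|\le c\varepsilon$, so the near contribution is at most $c\varepsilon\, C_0$. On the far piece Theorem \ref{thm:LipWavelet} supplies the global bound $|\tilde{c}_\omega|\lesssim\norm{f}{\lip{s}}$, while the constraint $\rho((x,y),T(Q))>R$ forces either $|\log Y_j|$ or the normalised spatial distance $|2^{|\omega|}x-k|/(1+Y_j)$ to be of order $R$; consequently the tail of the previous geometric series is of order $\norm{f}{\lip{s}}\,e^{-\delta R}$ for some absolute $\delta>0$. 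Setting the absolute constant $c\coloneqq 1/(2C_0)$ makes the near contribution $\le\varepsilon/2$, and then choosing $R=R(f,\varepsilon)$ large enough that $C\norm{f}{\lip{s}}\,e^{-\delta R}\le\varepsilon/2$ takes care of the far piece, yielding $y^{2-s}|\partial_y^2\poisson{f}(x,y)|\le\varepsilon$.

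The main obstacle I foresee is the size estimate on $\Theta_l$ with the right balance of factors: simultaneously the $Y^2$ behaviour at the boundary, the polynomial decay in $Y$ at infinity coming from the vanishing moments, and sharp spatial decay, all strong enough for the full series $\sum_\omega(2^{|\omega|}y)^{-s}|\Theta_l(\cdots)|$ to be uniformly summable in $(x,y)$ and for its hyperbolic tail to be exponentially small in $R$. Once this estimate is secured, the remainder is a near/far splitting analogous to Lemma \ref{lemma:DifferenceBoundsWavelets}, and the constant $c$ that emerges is genuinely absolute, with the $f$- and $\varepsilon$-dependence pushed entirely into the choice of $R$.
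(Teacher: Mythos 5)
Your plan is correct and reaches the same conclusion as the paper, but the technical engine is genuinely different. Both arguments run the contrapositive, expand $f$ purely in the $\psi$-wavelets (using \eqref{eq:VanishingScalingCoefficients}), and split the expansion into a piece near $(x,y)$ (controlled by the hypothesis on the coefficients) and a far piece (controlled by the global $\lip{s}$ bound plus decay in $R$). The paper, however, never proves a pointwise space--scale estimate for $Y^2\partial_Y^2\poisson{\psi_l}$: it groups the wavelets into dyadic frequency blocks $f_j$, handles the low frequencies $2^{-j}>yR$ via harmonicity ($\partial_y^2=-\Delta_x$) and the sup-norm of $\partial^2 f_j$, the high frequencies $2^{-j}\leq y/R$ via Young's inequality with $\norm{\partial_y^2 P_y}{\Lp{1}}\lesssim y^{-2}$, and performs the spatial near/far split only in the $O(\log R)$ intermediate scales, where the far wavelets are killed by the tail of the (derivative of the) Poisson kernel; this uses only $\continuous^2$ regularity, compact support and bounded overlap, with no moment conditions. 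Your route instead rests on the single kernel estimate for $\Theta_l$, whose $Y^{-n-3}$ decay at large $Y$ genuinely requires the vanishing moments up to order $r\geq 2$ (which the basis does provide); once that estimate is in hand, your one-shot near/far decomposition directly in the hyperbolic metric is cleaner and gives an explicit exponentially small tail, whereas the paper settles for the polynomial decay $R^{-s}+R^{-(2-s)}+R^{-1}$, which is equally sufficient. Two small points to tighten: the simultaneous bound on $\Theta_l$ (the $\min(Y^2,Y^{-n-3})$ factor together with the spatial decay $(1+|X|/(1+Y))^{-(n+3)}$) is the crux and should be written out via the Taylor expansion against the moments of $\psi_l$ (and, near $Y=0$, via $\partial_Y^2\poisson{\psi_l}=-\poisson{\Delta\psi_l}$); and in the far regime the constraint $\rho((x,y),T(Q))>R$ forces the normalised spatial distance to be exponentially large in $R$ (its logarithm is of order $R$), not merely of order $R$ --- this only helps you, and the claimed $e^{-\delta R}$ tail is indeed what comes out.
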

  \begin{proof}
    Fix $(x,y) \in \halfspace{n}$ and
    consider the set $G$ of dyadic cubes of the form $Q = \{x'\in\reals^n\colon 2^jx'-k\in[0,1]^n\}$
    such that $y/R < 2^{-j} < yR$ and $|x-2^{-j}k| \lesssim 2^{-j} R,$
    where $R$ is a positive constant to be determined later.
    By the basic properties of the hyperbolic distance, all top half-cubes $T(Q)$ for $Q \in G$
    are included in a hyperbolic neighbourhood of $(x,y)$ in the upper half-space
    whose radius depends only on $R.$
    It is hence enough to verify that, by an appropriate choice of $R,$ the assumption
    \begin{equation}
      \label{eq:WaveletHypothesis}
      \sup_l |c_{(l,Q)}(f)| \leq \varepsilon 2^{-j(n/2+s)}
    \end{equation}
    for every $Q \in G$ implies that
    \begin{equation}
      \label{eq:DerivativeConclussion}
      y^{2-s} \left|\frac{\partial^2 u}{\partial y^2}(x,y)\right| \lesssim \varepsilon.
    \end{equation}
    
    Recall that, by Theorem~\ref{thm:LipWavelet} and assumption~\eqref{eq:VanishingScalingCoefficients},
    we may write
    \begin{equation*}
      f(x) = \sum_{1 \leq l \leq 2^n-1} \sum_{j\geq 0} \sum_{Q\in\dyadic_j}
      c_{(l,Q)}(f) \psi_{(l,Q)}(x)
    \end{equation*}
    with $|c_{(l,Q)}(f)| \lesssim 2^{-j(n/2+s)} \norm{f}{\lip{s}}$ when $Q \in \dyadic_j.$
    Now, for $j \geq 0,$ let us denote
    \begin{equation*}
      f_j(x) = \sum_{1 \leq l \leq 2^n-1} \sum_{Q\in\dyadic_j} c_{(l,Q)}(f) \psi_{(l,Q)}(x),
    \end{equation*}
    and also consider its harmonic extension $u_j = \poisson{f_j}$ on the upper half-space.
    We estimate first the contribution of $u_j$ to~\eqref{eq:DerivativeConclussion}
    for $j$ such that $2^{-j} > yR.$
    Of course, if $y$ is not small enough, this range is empty and
    the corresponding contribution is automatically $0,$
    and the same remark applies to some other cases considered below.
    Note that, by harmonicity, it is enough to bound $|(\partial^2 u_j/\partial x_i^2) (x,y)|$
    for $1 \leq i \leq n.$
    First observe that
    \begin{equation*}
      \left|\frac{\partial^2 u_j}{\partial x_i^2}(x,y)\right|
      = \left|\left(P_y \ast \frac{\partial^2 f_j}{\partial x_i^2}\right)(x,y)\right|
      \leq \norm{P_y}{\Lp{1}} \norm{\frac{\partial^2 f_j}{\partial x_i^2}}{\Lp{\infty}}
      = \norm{\frac{\partial^2 f_j}{\partial x_i^2}}{\Lp{\infty}}.
    \end{equation*}
    Then, using that for any multi-index $\alpha$ of length $|\alpha| = 2$
    we have $|\partial^\alpha \psi_{(l,Q)}| \lesssim 2^{j(n/2+2)}$ for $Q \in \dyadic_j,$
    the bound on the wavelet coefficients $|c_{(l,Q)}|$
    and the bounded overlap of the wavelet functions (due to their compact support),
    we obtain
    \begin{equation*}
      \left|\frac{\partial^2 u_j}{\partial x_i^2}(x,y)\right|
      \lesssim \norm{f}{\lip{s}} 2^{j(2-s)}.
    \end{equation*}
    Thus, summing over $j,$ for $2^{-j} > yR,$ we get that
    \begin{equation}
      \label{eq:DerivativeBoundLowFreq}
      \sum_{2^{-j} > yR} \left|\frac{\partial^2 u_j}{\partial x_i^2}(x,y)\right|
      \lesssim \norm{f}{\lip{s}} y^{-2+s} R^{-2+s} \lesssim \varepsilon y^{-2+s},
    \end{equation}
    where the last inequality holds for $R = R(f,\varepsilon)$ large enough (independent of $(x,y)$),
    since $s \leq 1.$
    
    Next, we compute the contribution of $u_j$ to~\eqref{eq:DerivativeConclussion} for $j$
    such that $2^{-j} \leq y/R.$
    In this case, we have that
    \begin{equation*}
      \left|\frac{\partial^2 u_j}{\partial y^2}(x,y)\right|
      = \left|\left(\frac{\partial^2 P_y}{\partial y^2} \ast f_j\right)(x,y)\right|
      \leq \norm{\frac{\partial^2 P_y}{\partial y^2}}{\Lp{1}} \norm{f_j}{\Lp{\infty}}.
    \end{equation*}
    One can see by direct computation that
    \begin{equation}
      \label{eq:PoissonSecondDerivative}
      \left|\frac{\partial^2 P_y}{\partial y^2}(x,y)\right|
      \lesssim y^{-n-2} \left(1 + \frac{|x|^2}{y^2}\right)^{-(n+3)/2},
    \end{equation}
    so in particular $\norm{(\partial^2 P_y/\partial y^2)}{\Lp{1}} \lesssim 1/y^2.$
    This last estimate together with $\norm{f_j}{\Lp{\infty}} \lesssim \norm{f}{\lip{s}} 2^{-js},$ which holds again
    because of the bound on the wavelet coefficients and the bounded overlap
    of the wavelets themselves, show that
    \begin{equation*}
      \left|\frac{\partial^2 u_j}{\partial y^2}(x,y)\right|
      \lesssim \norm{f}{\lip{s}} y^{-2} 2^{-js}.
    \end{equation*}
    Summing now over $j,$ for $2^{-j} \leq y/R,$ we get that
    \begin{equation}
      \label{eq:DerivativeBoundHighFreq}
      \sum_{2^{-j} \leq y/R} \left|\frac{\partial^2 u_j}{\partial y^2}(x,y)\right|
      \lesssim \norm{f}{\lip{s}} y^{-2+s} R^{-s} \lesssim \varepsilon y^{-2+s},
    \end{equation}
    where the last inequality holds for $R = R(f,\varepsilon)$ large enough.
    
    For $j$ such that $y/R < 2^{-j} \leq yR,$ we express $f_j = g_j + h_j,$ where
    \begin{equation*}
      g_j(x) = \sum_{1 \leq l \leq 2^n-1} \sum_{Q\in\dyadic_j\cap G} c_{(l,Q)}(f) \psi_{(l,Q)}(x).
    \end{equation*}
    If $y < 2^{-j} \leq yR,$ as we did in the case $yR < 2^{-j},$ we have that
    \begin{multline*}
      \left|\frac{\partial^2 u_j}{\partial x_i^2}(x,y)\right|
      = \left|\left(P_y \ast \frac{\partial^2 f_j}{\partial x_i^2}\right)(x,y)\right|\\
      \leq \left|\left(P_y \ast \frac{\partial^2 g_j}{\partial x_i^2}\right)(x,y)\right|
      + \left|\left(P_y \ast \frac{\partial^2 h_j}{\partial x_i^2}\right)(x,y)\right|.
    \end{multline*}
    Because of~\eqref{eq:WaveletHypothesis}, the first term is bounded by
    $C \varepsilon 2^{j(2-s)}.$
    Observe that function $h_j$ only contains wavelets whose supports lie on the set
    $\{t\in\reals^n\colon |t-x| \gtrsim yR\}.$
    Thus, we can bound the second term by
    \begin{equation*}
      C \norm{\frac{\partial^2 h_j}{\partial x_i^2}}{\Lp{\infty}} \int_{|t| \gtrsim yR} P_y(t)\, dt
      \lesssim \norm{f}{\lip{s}} 2^{j(2-s)} \frac{1}{R}.
    \end{equation*}
    This yields, by harmonicity, that
    \begin{equation}
      \label{eq:DerivativeBoundMedLowFreq}
      \sum_{y < 2^{-j} \leq yR} \left|\frac{\partial^2 u_j}{\partial y^2}(x,y)\right|
      \lesssim \left(\varepsilon + \frac{\norm{f}{\lip{s}}}{R}\right) y^{-2+s}
      \lesssim \varepsilon y^{-2+s},
    \end{equation}
    where the last inequality holds for $R = R(f,\varepsilon)$ large enough.
    Similarly, if $y/R < 2^{-j} \leq y,$ we write
    \begin{multline*}
      \left|\frac{\partial^2 u_j}{\partial y^2}(x,y)\right|
      = \left|\left(\frac{\partial^2 P_y}{\partial y^2} \ast f_j\right)(x,y)\right|\\
      \leq \left|\left(\frac{\partial^2 P_y}{\partial y^2} \ast g_j\right)(x,y)\right|
      + \left|\left(\frac{\partial^2 P_y}{\partial y^2} \ast h_j\right)(x,y)\right|.
    \end{multline*}
    Now, the first term is bounded by $C \varepsilon y^{-2} 2^{-js}$ because of condition~
    \eqref{eq:WaveletHypothesis}.
    Taking into account that the wavelets appearing in $h_j$ are supported on
    $\{t\in\reals^n\colon |t-x| \gtrsim 2^{-j} R\},$ the second term is bounded by
    \begin{equation*}
      C \norm{h_j}{\infty} \int_{|t| \gtrsim 2^{-j} R} \left|\frac{\partial^2 P_y}{\partial y^2}(t)\right|\, dt
      \lesssim \norm{f}{\lip{s}} y 2^{j(3-s)} \frac{1}{R^3},
    \end{equation*}
    where we have used equation~\eqref{eq:PoissonSecondDerivative} to see that
    $\int_{|x|>Ay} |(\partial^2 P_y/\partial y^2)(t)\, dt| \lesssim y^{-2}A^{-3}.$
    It follows that
    \begin{equation}
      \label{eq:DerivativeBoundMedHighFreq}
      \sum_{y/R < 2^{-j} \leq y} \left|\frac{\partial^2 u_j}{\partial y^2}(x,y)\right|
      \lesssim \left(\varepsilon + \frac{\norm{f}{\lip{s}}}{R^s}\right) y^{-2+s}
      \lesssim \varepsilon y^{-2+s},
    \end{equation}
    where the last inequality holds for $R = R(f,\varepsilon)$ large enough
    (and independent of $(x,y)$).
    Since $u = \sum_j u_j,$ estimates~\eqref{eq:DerivativeBoundLowFreq},~
    \eqref{eq:DerivativeBoundHighFreq},~\eqref{eq:DerivativeBoundMedLowFreq}
    and~\eqref{eq:DerivativeBoundMedHighFreq} yield~\eqref{eq:DerivativeConclussion},
    as we wanted to see.
  \end{proof}
  
  \begin{proof}[Proof of Theorems~\ref{thm:DistanceJbmoDifferences} and~\ref{thm:DistanceJbmoDerivatives}]
    Let $0 < s \leq 1$ and fix $f \in \lip{s}.$
    Let us denote by $\tau_0 = \tau_0(f)$ the infimum in~\eqref{eq:DistanceJbmoWavelets},
    by $\sigma_0 = \sigma_0(f)$ the infimum in~\eqref{eq:DistanceJbmoDifferences} and
    by $\delta_0 = \delta_0(f)$ the one in~\eqref{eq:DistanceJbmoDerivatives}.
    Also, let $c$ be the smallest of the constants appearing in Lemmas~\ref{lemma:DifferenceBoundsWavelets},~
    \ref{lemma:DerivativeBoundsDifferences} and~\ref{lemma:WaveletBoundsDerivatives}.
    Note that we can assume that $c \leq 1.$
    We assume as well that the wavelet basis used in Theorem~\ref{thm:DistanceJbmoWavelets}
    has regularity $r > n + 3$
    -- note that this is just for the sake of the proof below,
    in the actual wavelet characterisation in Theorem~\ref{thm:DistanceJbmoWavelets}
    it is enough to assume that $r > s.$
  
    We first show that
    \begin{equation}
      \label{eq:TauLesssimSigma}
      \tau_0 \leq c^{-1} \sigma_0.
    \end{equation}
    To that end, note that by Lemma~\ref{lemma:BadDifferencesDilation} we have that
    $M(\dilation{S(\varepsilon)}{R})   < \infty$ for all $\varepsilon > \sigma_0$ and $R \geq 1.$
    Then, Lemma~\ref{lemma:DifferenceBoundsWavelets} implies that $T(c^{-1}\varepsilon) < \infty,$
    which gives~\eqref{eq:TauLesssimSigma}.
    Exactly in the same way, Lemmas~\ref{lemma:BadDerivativesDilation} and~\ref{lemma:DerivativeBoundsDifferences}
    yield the inequality
    \begin{equation}
      \label{eq:SigmaLesssimDelta}
      \sigma_0 \leq c^{-1} \delta_0.
    \end{equation}
    Finally, in order to treat the remaining inequality, we write
    \begin{equation*}
      f = \sum_{Q \in \dyadic_0} d_Q(f) \varphi_Q + \sum_{\omega \in \mathcal{Q}} c_\omega(f) \psi_\omega
        = g + b
    \end{equation*}
    and denote $u_g$ (resp. $u_b$) the Poisson extension of $g$ (resp. of $b$).
    Because of the (at least) $\continuous^2$-regularity of the wavelets $\varphi_Q$
    and their bounded overlap due to their compact support,
    we deduce that $\norm{\partial^\alpha g}{\Lp{\infty}} \leq C$ for every multi-index with $|\alpha| \leq 2.$
    This implies that
    \begin{equation*}
      y^{2-s} \left|\frac{\partial^2 u_g}{\partial y^2}\right|
      \lesssim \min (y^{2-s},y^{-s}),
    \end{equation*}
    because of the estimate $\norm{\partial^2 P_y/\partial y^2}{\Lp{1}} \lesssim y^{-2}$
    and the fact that $\norm{P_y}{\Lp{1}} = 1.$
    Since the previous bound tends to zero uniformly as $y \to 0^+$ and as $y \to \infty,$
    we deduce immediately from the definition that $M(D(s,g,\varepsilon)) < \infty$ for every $\varepsilon > 0.$
    Now, for any $\varepsilon_1 + \varepsilon_2 \leq \varepsilon$ we have that
    \begin{equation*}
      M(D(s,f,\varepsilon)) \leq M(D(s,g,\varepsilon_1)) + M(D(s,b,\varepsilon_2)),
    \end{equation*}
    so we deduce that $\delta_0(f) \leq \delta_0(b).$
    On the other hand, we have that $\tau_0(f) = \tau_0(b)$ by definition,
    and Lemma~\ref{lemma:WaveletBoundsDerivatives} applies to the function $b,$
    so that with this and Lemma~\ref{lemma:BadWaveletCubesDilations} we deduce as before that
    \begin{equation}
      \label{eq:DeltaLesssimTau}
      \delta_0 \leq \delta_0(b) \leq c^{-1} \tau_0(b) = c^{-1} \tau_0.
    \end{equation}
    The proof of Theorems~\ref{thm:DistanceJbmoDifferences} and~\ref{thm:DistanceJbmoDerivatives}
    now follows immediately from Theorem~\ref{thm:DistanceJbmoWavelets}
    and inequalities~\eqref{eq:TauLesssimSigma},~\eqref{eq:SigmaLesssimDelta} and~\eqref{eq:DeltaLesssimTau}.
  \end{proof}
  
  \printbibliography
  
  \Addresses

\end{document}